\crefname{lem}{Lemma}{Lemmas}
\crefname{thm}{Theorem}{Theorems}
\crefname{cor}{Corollary}{Corollaries}
\crefname{prop}{Proposition}{Propositions}
\crefname{conj}{Conjecture}{Conjectures}
\crefname{rmk}{Remark}{Remarks}
\crefname{openproblem}{Open Problem}{Open Problems}
\setlist[itemize]{topsep=0ex,itemsep=0ex,parsep=0.4ex}
\setlist[enumerate]{topsep=0ex,itemsep=0ex,parsep=0.4ex}
\newcommand{\defn}[1]{\textcolor{Maroon}{\emph{#1}}}
\newcommand{\GG}{\mathcal{G}}
\newcommand{\NN}{\mathbb{N}}
\newcommand{\RR}{\mathbb{R}}
\def\NAT@spacechar{~}
\DeclarePairedDelimiter{\ceil}{\lceil}{\rceil}
\renewcommand{\geq}{\geqslant}
\renewcommand{\leq}{\leqslant}
\DeclareMathOperator{\dist}{dist}
\DeclareMathOperator{\tw}{tw}
\DeclareMathOperator{\sep}{sep}
\renewcommand{\thefootnote}{\fnsymbol{footnote}}
\theoremstyle{plain}
\newtheorem{thm}{Theorem}
\newtheorem{lem}[thm]{Lemma}
\newtheorem{cor}[thm]{Corollary}
\newtheorem{prop}[thm]{Proposition}
\theoremstyle{definition}
\newtheorem{conj}[thm]{Conjecture}
\begin{document}

\title{\bf\fontsize{16pt}{0pt}\selectfont  
Graphs of Linear Growth have Bounded Treewidth}

\author{
Rutger Campbell\,\footnotemark[1] \qquad
Marc Distel\,\footnotemark[2] \qquad
J.~Pascal Gollin\,\footnotemark[1] \\
Daniel~J.~Harvey\, \qquad
Kevin Hendrey\,\footnotemark[1] \qquad
Robert~Hickingbotham\,\footnotemark[2] \\
Bojan Mohar\,\footnotemark[3] \qquad
David~R.~Wood\,\footnotemark[2]
}

\footnotetext[1]{Discrete Mathematics Group, Institute for Basic Science (IBS), Daejeon, Republic~of~Korea (\texttt{\{rutger,pascalgollin,kevinhendrey\}@ibs.re.kr}). Research of R.C.\ and K.H.\ supported by the Institute for Basic Science (IBS-R029-C1). Research of J.P.G.\ supported by the Institute for Basic Science (IBS-R029-Y3).}

\footnotetext[2]{School of Mathematics, Monash University, Melbourne, Australia (\texttt{\{marc.distel,robert.hickingbotham, david.wood\}@monash.edu}). Research of D.W.\ supported by the Australian Research Council. Research of M.D.\ and R.H.\ supported by Australian Government Research Training Program Scholarships.}

\footnotetext[3]{Department of Mathematics, Simon Fraser University, Burnaby, British Columbia, Canada (\texttt{mohar@sfu.ca}). Supported in part by the NSERC grant R611450.}

\sloppy
	
\maketitle

\begin{abstract}
    A graph class~$\GG$ has linear growth if, for each graph~${G \in \GG}$ and every positive integer~$r$, every subgraph of~$G$ with radius at most~$r$ contains~${O(r)}$ vertices. 
    In this paper, we show that every graph class with linear growth has bounded treewidth. 
\end{abstract}


\section{Introduction}
\renewcommand{\thefootnote}{\arabic{footnote}}

The \defn{growth} of a (possibly infinite) graph\footnote{We consider undirected graphs~$G$ with vertex-set~${V(G)}$ and edge-set~${E(G)}$. For integers~${m,n \in \mathbb{Z}}$, let ${[m,n] := \{ z \in \mathbb{Z} \colon m \leq z \leq n\}}$ and~${[n]:=[1,n]}$. Let~$\NN$ be the set of positive integers. Let \defn{$\log$} be the natural logarithm $\log_e$.} 
$G$ is the function~${f_G \colon \NN \to \NN \cup \{\infty\}}$ where~${f_G(r)}$ is the supremum of~${|V(H)|}$ taken over all subgraphs~$H$ of~$G$ with radius at most~$r$. 
Growth in graphs is an important topic in group theory~\citep{Sam02,Mil68,Gro81,Gro87,Gri91,Wolf68,GH97}, where growth of a finitely generated group is defined through the growth of the corresponding Cayley graphs. 
Growth of graphs also appears in metric geometry~\citep{KL07}, algebraic graph theory~\citep{Trofimov84,IS91,GISWW89,GS92,IS87,IS86}, and in models of random infinite planar graphs~\citep{EL21,Angel03}. 
A graph class~$\GG$ has \defn{linear}/\defn{quadratic}/\defn{polynomial}/\defn{exponential growth} if~${\sup \{ f_G(r) \colon G \in \GG \}}$ is bounded from above and below by a linear/quadratic/polynomial/exponential function of~$r$.

This paper focuses on graph classes with linear growth. 
Linear growth has previously been studied in the context of infinite vertex-transitive graphs~\citep{Trofimov84,IS91,GISWW89,GS92,IS87,IS86}. 
Notably, \citet{IS86} characterised when an infinite vertex-transitive graph has linear growth in terms of its automorphism group. 
We take a more structural and less algebraic approach, and prove that graph classes with linear growth have  a tree-like structure. 

To formalise this result, we need the following definition. 
A \defn{tree-decomposition} of a graph~$G$ is a collection~${( B_x \subseteq V(G) \colon x \in V(T) )}$ of subsets of~${V(G)}$ (called \defn{bags}) indexed by the nodes of a tree~$T$, such that:  
\begin{itemize}
    \item for every edge~${uv \in E(G)}$, some bag~$B_x$ contains both~$u$ and~$v$, and
    \item for every vertex~${v \in V(G)}$, the set~${\{ x \in V(T) \colon v \in B_x \}}$ induces a non-empty subtree of $T$.
\end{itemize}  
The \defn{width} of a tree-decomposition is the size of the largest bag, minus~$1$. 
The \defn{treewidth} of a graph $G$, denoted by \defn{$\tw(G)$}, is the smallest integer~$w$ for which there is a tree-decomposition of~$G$ of width~$w$, or~$\infty$ if no such~$w$ exists. Treewidth can be thought of as measuring how structurally similar a graph is to a tree. 
Indeed, a connected graph has treewidth~$1$ if and only if it is a tree. 
Treewidth is of fundamental importance in structural and algorithmic graph theory; see \citep{Reed03,HW17,Bodlaender98} for surveys.

Our main result shows that graphs with linear growth have bounded treewidth.

\begin{restatable}{thm}{Main}\label{Main}
    For any~${c \geq 1}$, every graph~$G$ with growth~${f_G(r) \leq cr}$ has treewidth at most~${49c^2+30c}$. 
\end{restatable}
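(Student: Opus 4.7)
The plan is to find small \emph{balanced separators} in every subgraph of $G$ using BFS annuli, then invoke the standard relationship between balanced-separator size and treewidth. The cornerstone is that linear growth is inherited by subgraphs: if $H \subseteq G$ then any subgraph of $H$ is a subgraph of $G$, so $f_H(r) \leq f_G(r) \leq cr$. Consequently, for every $v \in V(H)$ and $r \geq 1$, the BFS ball $B_H(v,r) := \{u : \dist_H(u,v) \leq r\}$ has at most $cr$ vertices, since shortest $v$--$u$ paths in $H$ stay within the ball so $H[B_H(v,r)]$ has radius at most $r$. A side-effect at $r=1$ is that $G$ has maximum degree at most $c-1$.

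The key step is: every finite subgraph $H$ with $n := |V(H)|$ admits a $\tfrac{1}{3}$-balanced separator of size $O(c^2)$. Fix $v \in V(H)$ and write $L_i := \{u : \dist_H(u,v) = i\}$. For a width parameter $w = \Theta(c)$, the \emph{annulus} $A_r := L_r \cup L_{r+1} \cup \cdots \cup L_{r+w-1}$ separates $B_H(v,r-1)$ from $V(H) \setminus B_H(v,r+w-1)$. Let $r_1$ be minimal with $|B_H(v,r_1-1)| \geq n/3$ and $r_2$ maximal with $|B_H(v,r_2+w-1)| \leq 2n/3$; then for every $r \in [r_1,r_2]$ the annulus $A_r$ is a $\tfrac{1}{3}$-balanced separator of $H$. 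Swapping the order of summation,
\[
    \sum_{r=r_1}^{r_2} |A_r| \;\leq\; w\bigl(|B_H(v,r_2+w-1)| - |B_H(v,r_1-1)|\bigr) \;\leq\; \tfrac{wn}{3}.
\]
The range $[r_1,r_2]$ has length $\Omega(n/c)$ (ensured by the linear-growth bound applied at both endpoints, combined with the trivial lower bound $|B_H(v,r)| \geq r+1$), so averaging produces $r^{*}$ with $|A_{r^{*}}| \leq O(wc) = O(c^2)$, as desired. Small subgraphs with $n = O(c^2)$ are handled directly by taking $V(H)$ as a single bag.

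A standard theorem of Dvo\v{r}\'ak--Norin now gives $\tw(G) = O(s)$ where $s = O(c^2)$ is the supremum balanced-separator size over subgraphs, yielding $\tw(G) = O(c^2)$. The sharp bound $49c^2 + 30c$ requires further optimization: tuning $w$ and the balancing fraction, accounting for boundary terms in the double-count, and probably building the tree-decomposition directly from the annular separators rather than invoking Dvo\v{r}\'ak--Norin as a black box. The main obstacle I foresee is the pathological case in which BFS from $v$ produces a very short interval $[r_1,r_2]$ because the ball grows abruptly through the middle (a single layer $L_i$ exceeding $n/3$). Bounded degree prevents the most extreme versions of this, but handling it cleanly---likely by re-selecting $v$ or by using a second BFS from a carefully chosen far vertex---is where the most work is needed to obtain the quadratic bound with the stated constants.
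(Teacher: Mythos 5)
Your high-level frame (inherit linear growth in subgraphs, extract balanced separators from BFS layers, finish with Dvo\v{r}\'ak--Norin) matches the paper, but the central step of your argument has a genuine gap. The averaging over annuli needs the window $[r_1,r_2]$ to have length $\Omega(n/c)$, and the justification you offer does not give this: the growth bound yields $r_2\geq \tfrac{2n}{3c}-w+1$, while the trivial bound $|B_H(v,r)|\geq r+1$ only yields $r_1\leq \tfrac{n}{3}+1$, and since $\tfrac{2n}{3c}<\tfrac{n}{3}$ for $c>2$ these two estimates do not even show the window is non-empty. Worse, the claimed lower bound on the window length is false, not merely unjustified. For example, let $v$ be the common endpoint of $m\approx c/4$ disjoint paths of length $R$, and attach to the far end of each path a tree that fans out (respecting the degree bound) into $\approx c/2$ parallel paths of length $\Theta(n/c^2)$, with $mR\approx n/3$. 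Every ball in this graph meets at most one fan together with its own path, so the growth is $O(c)\,r$; yet the BFS layers from $v$ jump from size $\Theta(c)$ to size $\Theta(c^2)$ exactly as the ball passes $n/3$, the middle third is crossed in $\Theta(n/c^2)$ layers, and every annulus of width $\Theta(c)$ in the window has size $\Theta(c^3)$, so your averaging cannot produce an $O(c^2)$ separator there. Thus the ``pathological case'' you flag at the end is not a boundary nuisance to be optimized away; it is the crux, and it is not clear that re-rooting the BFS repairs it. (There is also a smaller implicit step: for disconnected subgraphs the BFS argument only controls the component of $v$, so one needs a reduction to connected subgraphs as in \cref{SeparationConnected}.)

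The paper avoids this entirely by not insisting on $\tfrac23$-balance in one shot. In \cref{LinearBalancedSep} it observes that the layer sizes sum to at most $cp$ (where $p$ is the eccentricity of $v$), so at least half of the indices $i\in[p]$ satisfy $|V_i|<2c$; taking a ``median'' such small layer gives a separation of order less than $2c$ in which each side keeps at least $p/4\geq n/(4c)$ vertices, i.e.\ only a $(1-\tfrac{1}{4c})$-balanced separation. The folklore boosting lemma \cref{BalancedSepAlpha} then converts $\alpha$-balanced separations into $\tfrac23$-balanced ones at the cost of a factor $\lceil\log_\alpha(\tfrac23)\rceil=O(c)$, giving $\sep_{2/3}(G)=O(c^2)$, and \cref{BalancedSep} yields $\tw(G)\leq 15\sep_{2/3}(G)$, with the explicit constants coming from the mean value theorem estimate of $\log_{1-1/(4c)}(\tfrac23)$. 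This ``weakly balanced single small layer, then iterate'' idea is exactly what your proposal is missing: a single layer of size $<2c$ only needs to split off $n/(4c)$ vertices on each side, which always exists, so the abrupt-crossing phenomenon never arises. If you want to salvage your write-up, proving and invoking such a boosting lemma (rather than tuning $w$ or re-selecting the root) is the natural repair.
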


It suffices to prove \cref{Main} for finite graphs, since for~${k \in \NN}$, an infinite graph has treewidth at most~$k$ if and only if every finite subgraph has treewidth at most~$k$ (see \citep{Thomas88,Thomassen89}). 

\cref{Main} is proved in \cref{GrowthTreewidth}, where we also prove an~${\Omega(c\log c)}$ lower bound on the treewidth in \cref{Main}. 
\cref{GrowthMinors} considers graphs with linear growth in proper minor-closed graph classes. 
In this case, we improve the upper bound on the treewidth in \cref{Main} to~${O(c)}$. 
\cref{ProductStructure} explores the product structure of graphs with linear growth. 
Combining \cref{Main} with results from the literature, we show that graphs with linear growth are subgraphs of bounded `blow-ups' of trees with bounded maximum degree, which is a qualitative strengthening of \cref{Main}. 
This section also presents two conjectures about the product structure of graphs with linear and polynomial growth. 
Finally, \cref{Subdivisions} studies the growth of subdivisions of graphs. 
We show that a finite graph with bounded treewidth and bounded maximum degree has a subdivision with linear growth. 
We also show that for any superlinear function~$f$ with~${f(r) \geq 1+\Delta r}$, every finite graph with maximum degree~$\Delta$ (regardless of its treewidth) has a subdivision with growth bounded above by~$f$. 
These results show that, for instance, in \cref{Main}, ``treewidth'' cannot be replaced by ``pathwidth'', while ``${cr}$'' cannot be replaced by ``${cr^{1+\varepsilon}}$''. 

Graphs with bounded treewidth have many attractive properties, and \cref{Main} implies that all such properties hold for graphs of linear growth. 
To conclude this introduction, we give one such example. 
A \defn{$k$-stack layout} of a graph~$G$ is a pair~${(\leq,\varphi)}$ where $\leq$ is a linear ordering on $V(G)$ and ${\varphi\colon E(G)\to[k]}$ is a function such that ${\varphi(ux) \neq \varphi(vy)}$ for any two edges~${ux,vy \in E(G)}$ with~${u<v<x<y}$. 
The \defn{stack-number} of a (possibly infinite) graph $G$ is the minimum integer~${k\geq 0}$ such that there exists a $k$-stack layout of $G$, or~$\infty$ if no such~$k$ exists. 
This topic is widely studied; see \citep{DujWoo07,DEHMW22,BBKR17,MBKPRU20,Yann89,Yann20} for example.

\citet{EHMNSW} recently showed that~${P_n \boxtimes P_n \boxtimes P_n}$, which has growth~${(2r+1)^3}$, has unbounded stack-number (as~${n \to \infty}$). 
Motivated by this discovery, they asked whether graphs of quadratic or of linear growth have bounded stack-number. 
\citet{GH-DAM01} showed that every finite graph with treewidth~$k$ has stack-number at most~${k+1}$. 
\cref{Main} thus implies a positive answer to the second part of this question. 

\begin{thm}\label{Stack}
    For any~${c \geq 1}$, every graph~$G$ with growth~${f_G(r) \leq cr}$ has stack-number at most~${49c^2+30c+1}$. 
\end{thm}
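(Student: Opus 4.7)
The plan is to deduce \cref{Stack} as an immediate corollary of \cref{Main} together with the classical result of \citet{GH-DAM01} which states that every finite graph of treewidth at most $k$ has stack-number at most $k+1$. So the first step is simply to apply \cref{Main} to obtain $\tw(G) \leq 49c^2+30c$, and then invoke \citet{GH-DAM01} to get the claimed bound on the stack-number, at least in the case when $G$ is finite.

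The only potential subtlety is that the theorem as stated also covers infinite graphs $G$, whereas the bound of \citet{GH-DAM01} is stated for finite graphs. To handle this, I would observe that the hypothesis $f_G(r) \leq cr$ is inherited by every subgraph $H \subseteq G$: any subgraph of $H$ with radius at most $r$ is in particular a subgraph of $G$ with radius at most $r$, so it has at most $cr$ vertices. Therefore the finite case already establishes that every finite subgraph of $G$ has stack-number at most $49c^2+30c+1$.

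To lift this uniform bound on finite subgraphs to a bound on all of $G$, I would use a standard compactness argument: since $k$-stack-layouts can be described by a first-order theory in the language of a linear order on $V(G)$ together with a colouring $\varphi\colon E(G)\to[k]$, the compactness theorem for first-order logic (or equivalently, Tychonoff's theorem applied to the space $[k]^{E(G)}$ together with the space of linear orderings on $V(G)$, or a Rado-type selection lemma) yields a $k$-stack-layout of $G$ whenever every finite subgraph admits one, with $k = 49c^2+30c+1$. This kind of compactness argument is entirely analogous to the statement used earlier in the excerpt that an infinite graph has treewidth at most $k$ iff all its finite subgraphs do, so it is really the only step needing any verification beyond citation.

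The main obstacle is essentially non-existent: once \cref{Main} is in hand, the result follows by two quotations from the literature plus a routine compactness step. I would present the proof as a short paragraph: apply \cref{Main}, cite \citet{GH-DAM01} for the finite case, and note the compactness extension to infinite $G$.
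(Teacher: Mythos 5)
Your proposal is correct and follows essentially the same route as the paper: apply \cref{Main}, invoke the treewidth-to-stack-number bound of \citet{GH-DAM01} for finite graphs, and extend to infinite graphs by a standard compactness argument (the paper carries this out via the Generalised Infinity Lemma in \cref{StackInfinite}, which is equivalent to your first-order/Tychonoff formulation). No gaps.
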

 
As before, it suffices to prove \cref{Stack} for finite graphs, since a standard compactness argument shows that for~${k \in \NN}$, an infinite graph has stack-number at most~$k$ if and only if every finite subgraph has stack-number at most~$k$ (see \cref{StackInfinite}). 

For the remainder of the paper, we assume that every graph is finite.

\section{Growth and Treewidth}
\label{GrowthTreewidth}

This section proves our main result, \cref{Main}, as well as a lower bound for the growth of the class of graphs of tree-width  at most $c$, \cref{LowerBound}. 

The key tool we use is that of balanced separations. 
A \defn{separation} of a graph~$G$ is a pair~${(A,B)}$ of subsets of~${V(G)}$ such that~${A \cup B = V(G)}$ and no edge of~$G$ has one end in~${A \setminus B}$ and the other in~${B \setminus A}$. 
The \defn{order} of the separation~${(A,B)}$ is~${|A \cap B|}$. 
For~${\alpha\in [\frac{2}{3},1)}$, a separation~${(A,B)}$ of a graph on~$n$ vertices is \defn{$\alpha$-balanced} if~${|A| \leq \alpha n}$ and~${|B| \leq \alpha n}$. 
The \defn{$\alpha$-separation number} $\sep_{\alpha}(G)$ of a graph~$G$ is the smallest integer~$s$ such that every subgraph of~$G$ has an $\alpha$-balanced separation of order at most~$s$.

\citet{RS-II} showed that~${\sep_{2/3}(G) \leq \tw(G)+1}$. 
\citet{DN19} established the following converse. 

\begin{thm}[\cite{DN19}]\label{BalancedSep}
    For every graph~$G$, ${\tw(G)\leq 15\sep_{2/3}(G)}$.
\end{thm}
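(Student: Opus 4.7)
The plan is to prove the theorem by a recursive construction guided by balanced separations, using the following strengthening amenable to induction: setting $s := \sep_{2/3}(G)$, for every subgraph $H$ of $G$ and every $W \subseteq V(H)$ with $|W|$ at most an appropriate constant multiple of $s$, there exists a tree decomposition of $H$ of width at most $15s$ whose root bag contains $W$. Taking $H = G$ and $W = \emptyset$ then yields the theorem.

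The induction is on $|V(H)|$. In the base case where $|V(H)| \le 15s+1$, the single-bag decomposition trivially works. Otherwise, I would select a separation $(A,B)$ of $H$ of order at most $s$ that is \emph{doubly balanced}: both $|A|, |B|$ are a bounded fraction of $|V(H)|$ and $|W \cap A|, |W \cap B|$ are a bounded fraction of $|W|$. Writing $S := A \cap B$, set $W_A := (W \cap A) \cup S$ and $W_B := (W \cap B) \cup S$. Apply the induction hypothesis to $(H[A], W_A)$ and $(H[B], W_B)$ to obtain tree decompositions $\TT_A, \TT_B$ with distinguished bags containing $W_A$ and $W_B$ respectively. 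Glue $\TT_A$ and $\TT_B$ by a new root bag $W \cup S$ of size at most $|W|+|S|$, attached to the two distinguished bags. The subtree-condition for each $v \in V(H)$ and the edge-condition are routinely verified (the key point is that $S \subseteq W \cup S$ sits at the root bag, so any vertex appearing in both $\TT_A$ and $\TT_B$ lies in $S$ and therefore connects through the root), producing a valid tree decomposition of $H$ of width at most $15s$.

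The main obstacle is the existence of the doubly balanced separation of order $\le s$. Naively, $\sep_{2/3}$ only guarantees a vertex-balanced separation, which can completely fail to balance $W$; indeed, if $W$ lies entirely on one side of the separator then $W_A$ grows by $|S|$ at each recursion level and the induction cannot close. The fix is to apply $\sep_{2/3}$ in an auxiliary weighted setting: replace each $w \in W$ by a cluster of $M$ pairwise adjacent twins sharing $w$'s neighbourhood, and observe that twins can always be eliminated from a minimum balanced separator without increasing its order. This allows one to pull a balanced separation of the auxiliary graph back to a separation of $H$ of order at most $s$, balanced with respect to a weighted vertex count combining $V(H)$ and the twin copies; for $M$ sufficiently large the weighted balance forces near-exact $\tfrac{2}{3}$-balance of both $|V(H)|$ and $|W|$.

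Tracking constants across the recursion is the last non-trivial step: the bound $|W_A| \le \tfrac{2}{3}|W| + |S|$ must remain below the allowed threshold for $|W|$, while the root bag $W \cup S$ must fit within the bag-size budget $15s+1$. Optimising the two inequalities together dictates both the permissible range of $|W|$ in the inductive statement and the coefficient $15$ in the theorem. I expect the twin-elimination step to be the most delicate: one must verify that although the auxiliary graph is not a subgraph of $G$, every minimum $\tfrac{2}{3}$-balanced separator of it avoids all twin vertices, so its order is controlled by $\sep_{2/3}(H) \le \sep_{2/3}(G) = s$.
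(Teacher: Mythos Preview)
First, a framing note: the paper does not give its own proof of this theorem. It is quoted from \citet{DN19} as a black box, so there is no in-paper argument to compare against.

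On the proposal itself: the inductive skeleton (carry a boundary set $W$, separate, recurse on the two sides with $W_A,W_B$, glue at a root bag $W\cup S$) is the right shape, and you have correctly located the crux---obtaining a separation of $H$ of order at most $s$ that is balanced with respect to \emph{both} $V(H)$ and $W$. The gap is precisely in the twin trick you propose to resolve it.

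You form $H'$ by replacing each $w\in W$ with a clique of $M$ twins and then want a $\tfrac23$-balanced separation of $H'$ of order at most $s$. But $H'$ is not a subgraph of $G$, so the hypothesis $\sep_{2/3}(G)=s$ tells you nothing about $H'$. Your intended repair---that a minimum balanced separator of $H'$ can be taken to avoid twin vertices, hence lies in $V(H)$, hence ``its order is controlled by $\sep_{2/3}(H)$''---runs the implication the wrong way. Showing that \emph{if} $H'$ has a balanced separation of some order $t$ then it has one of order $\le t$ whose separator sits inside $V(H)$ is fine; but you still have to produce a balanced separation of $H'$ of small order in the first place, and the hypothesis does not do that. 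Going the other direction fails too: a $\tfrac23$-balanced separation of $H$ of order $\le s$ need not lift to a balanced separation of $H'$, since if most of $W$ lies on one side the added twins destroy the balance. In short, passing from ``every subgraph of $G$ has a small $\tfrac23$-balanced separator'' to ``$H$ has a small separator that also $\tfrac23$-balances an arbitrary prescribed $W$'' is exactly the non-trivial content of the Dvo\v{r}\'ak--Norin result, and the twin construction does not reduce it back to the hypothesis. You will need a genuinely different device here; consulting \cite{DN19} directly is the cleanest route.
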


The next two lemmas are folklore. 
The first one enables us to work in the more general setting of $\alpha$-balanced separation.

\begin{lem}\label{BalancedSepAlpha}
    For every~${\alpha \in [\frac{2}{3},1)}$ and every graph~$G$, ${\sep_{2/3}(G)\leq \lceil \log_{\alpha}(\frac{2}{3}) \rceil \sep_{\alpha}(G)}$.
\end{lem}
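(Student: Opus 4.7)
The plan is an iterated application of $\alpha$-balanced separations. Set $k := \lceil \log_\alpha(\tfrac{2}{3}) \rceil$ so that $\alpha^k \leq \tfrac{2}{3}$, and write $s := \sep_\alpha(G)$. Given a subgraph $H$ of $G$ on $n$ vertices, I would build a descending chain of induced subgraphs $H = H_0 \supseteq H_1 \supseteq \cdots$ as follows: at step $i$, apply an $\alpha$-balanced separation $(A_i, B_i)$ of $H_{i-1}$ of order at most $s$, take $A_i$ to be the larger side, and put $H_i := H_{i-1}[A_i]$, $C_i := A_i \cap B_i$, and $S_i := C_1 \cup \cdots \cup C_i$. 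Since $|V(H_i)| \leq \alpha |V(H_{i-1})| \leq \alpha^i n$, one can stop at the first $j \leq k$ with $|V(H_j)| \leq \tfrac{2}{3} n$; because $A_j$ is the larger side of an $\alpha$-balanced separation of $H_{j-1}$, one also has $|V(H_j)| \geq |V(H_{j-1})|/2 > \tfrac{n}{3}$, and the accumulated $|S_j|$ is at most $js \leq ks$.

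The proposed separation of $H$ is
\[
\tilde{A} := V(H_j) \cup S_j, \qquad \tilde{B} := (V(H) \setminus V(H_j)) \cup S_j,
\]
with separator $S_j$ of size at most $ks$. The separation property in $H$ follows from a short induction: any edge from $V(H_j) \setminus S_j$ to $V(H) \setminus (V(H_j) \cup S_j)$ would have its second endpoint in some layer $V(H_{\ell-1}) \setminus V(H_\ell)$, and hence lie in $H_{\ell-1}$ as an edge between $A_\ell \setminus B_\ell$ and $B_\ell \setminus A_\ell$ (using that the endpoint in $V(H_j) \subseteq V(H_\ell)$ avoids $C_\ell \subseteq S_j$), contradicting that $(A_\ell, B_\ell)$ separates $H_{\ell-1}$.

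The main obstacle is verifying $|\tilde{A}|, |\tilde{B}| \leq \tfrac{2}{3} n$. The bound on $|\tilde{B}|$ is immediate from $|V(H_j)| > \tfrac{n}{3}$. For $|\tilde{A}|$, the naive count gives $|V(H_j)| + |S_j|$, which may exceed $\tfrac{2}{3} n$; the remedy is to not place $S_j$ entirely on the $V(H_j)$ side but instead to bipartition the connected components of $H - S_j$. By the edge argument above, each such component lies entirely within $V(H_j)$ or entirely within some layer $V(H_{\ell-1}) \setminus V(H_\ell)$, and the sizes of these pieces are bounded by $|V(H_j)| \leq \tfrac{2}{3} n$ and by $|B_\ell| \leq \alpha^\ell n \leq \tfrac{2}{3} n$ respectively; a balanced packing of these pieces into two groups then yields the required $\tfrac{2}{3}$-balanced separation of order at most $ks$. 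The delicate point is showing that such a packing always exists with both groups of size at most $\tfrac{2}{3}n$, and this is where I expect the subtlest accounting to be needed.
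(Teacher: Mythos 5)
Your construction is essentially the paper's: iterate $\alpha$-balanced separations, accumulating at most $\sep_\alpha(G)$ separator vertices per step for at most $k=\lceil\log_\alpha(\tfrac23)\rceil$ steps, and the chain $H_0\supseteq H_1\supseteq\cdots$, the stopping rule $|V(H_j)|\le\tfrac23 n$, the bound $|V(H_j)|\ge|V(H_{j-1})|/2>\tfrac n3$, and the edge argument showing $S_j$ separates $V(H_j)\setminus S_j$ from the rest are all correct. The problem is the last step, which you yourself flag as unfinished. The ``main obstacle'' you identify is self-inflicted: the balance condition that is operative here (and the one the paper's own proof of this lemma verifies, and the one needed for \cref{BalancedSep}) bounds the two \emph{sides minus the separator}, i.e.\ one needs $|\tilde A\setminus\tilde B|\le\tfrac23 n$ and $|\tilde B\setminus\tilde A|\le\tfrac23 n$; the separator is allowed to sit in both sides. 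With $\tilde A=V(H_j)\cup S_j$ and $\tilde B=(V(H)\setminus V(H_j))\cup S_j$ this is immediate: $|\tilde A\setminus\tilde B|\le|V(H_j)|\le\tfrac23 n$ by the stopping rule, and $|\tilde B\setminus\tilde A|\le n-|V(H_j)|<\tfrac23 n$ since $|V(H_j)|>\tfrac n3$. (Under the literal reading ``$|A|\le\tfrac23 n$ and $|B|\le\tfrac23 n$'' neither your packing remedy nor the paper's own argument would close the gap, since adding $S_j$ to a class of size up to $\tfrac23(n-|S_j|)$ can exceed $\tfrac23 n$; that reading is not the intended one.)

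Two further points about the remedy you sketch, in case you want to keep it. First, the stated piece bound ``$|B_\ell|\le\alpha^\ell n\le\tfrac23 n$'' is false for $\ell<k$ (e.g.\ $\alpha=0.9$, $\ell=1$); the correct bound is that each layer $V(H_{\ell-1})\setminus V(H_\ell)$ has size at most $|V(H_{\ell-1})|/2\le n/2$, because $A_\ell$ was chosen as the larger side. Second, the packing itself is not delicate: if the pieces have total size $T$ and each piece has size at most $\tfrac23 T$, then either some piece has size at least $\tfrac13 T$ (put it alone in one group) or all pieces have size less than $\tfrac13 T$ (greedily fill one group until it first reaches $\tfrac13 T$); either way both groups have size at most $\tfrac23 T$. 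But as explained above, none of this is needed: your two-sided separation already does the job, exactly as in the paper.
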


\begin{proof}
    Let $H$ be a subgraph of $G$ and let $n:=|V(H)|$. 
    Let~$(A_1,B_1)$ be an $\alpha$-balanced separation of $H$ with order at most $\sep_{\alpha}(G)$. 
    For~${i\in \NN}$, we iteratively construct some $\max \{\frac{2}{3}, \alpha^i \}$-balanced separation~$(A_i,B_i)$ of $H$ with order at most $i\sep_{\alpha}(G)$. 
    If $\max\{|A_i \setminus B_i|,|B_i \setminus A_i|\} \leq \frac{2}{3}n$, then $(A_i,B_i)$ is a $\frac{2}{3}$-balanced separation of $H$ and we set~$(A_{i+1},B_{i+1}) := (A_i,B_i)$. 
    Otherwise, we may assume that~$|B_i \setminus A_i|>\frac{2}{3}n$. 
    Let $(C_i,D_i)$ be an $\alpha$-balanced separation of $H[B_i \setminus A_i]$ with order at most $\sep_{\alpha}(G)$. 
    Without loss of generality, assume that~${|D_i| \geq |C_i|}$ and hence~$|D_i| > \frac{n}{3}$. 
    Set $A_{i+1}:=A_i\cup C_i$ and $B_{i+1}:=D_i \cup (A_{i} \cap B_{i})$. 
    Thus $|B_{i+1}\setminus A_{i+1}|\leq \alpha|B_i \setminus A_i|\leq \alpha^{i+1}n$ and $|A_{i+1} \cap B_{i+1}|\leq  |A_i \cap B_i|+\sep_{\alpha}(G)\leq (i+1)\sep_{\alpha}(G)$ and $|A_{i+1} \setminus B_{i+1}|\leq n-|D_i|< \frac{2}{3}n$, so~$(A_{i+1},B_{i+1})$ is as desired. 
    
    Now $(A_i,B_i)$ with $i=\lceil \log_{\alpha}(\frac{2}{3})\rceil$ is a $\frac{2}{3}$-balanced separation of $H$ of order at most $i\sep_{\alpha}(G)$, as required. 
\end{proof}

\begin{lem}\label{SeparationConnected}
     For every~${\alpha \in [\frac{2}{3},1)}$ and every graph~$G$, if every connected subgraph of~$G$ has an $\alpha$-balanced separation of order less than~$c$, then~${\sep_{\alpha}(G)< c}$.
\end{lem}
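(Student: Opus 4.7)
The plan is to argue by a case analysis on the component structure of an arbitrary subgraph $H\subseteq G$, with $n:=|V(H)|$. If $H$ is connected the hypothesis supplies the desired $\alpha$-balanced separation, so I may assume $H$ is disconnected; let $C_1$ be a component of maximum size and set $D:=V(H)\setminus V(C_1)$. The argument splits on whether $|C_1|\le\alpha n$ or $|C_1|>\alpha n$.

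Suppose first that $|C_1|\le\alpha n$, so that every component has size at most $\alpha n$. I would process the components in decreasing order of size, placing each on the side (of an initially empty pair $(A,B)$) with the smaller current total; the result partitions $V(H)$, so the induced separation has order $0$. A direct check shows that it is also $\alpha$-balanced: if the final $|A|$ exceeded $\alpha n$, then the last component $C_j$ placed on $A$ would force $|C_j|>(2\alpha-1)n$, but since $\alpha\ge\tfrac{2}{3}$ at most two components can have size exceeding this threshold, and examining the greedy placements of $C_1$ and $C_2$ quickly rules out any such~$j$.

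Now suppose $|C_1|>\alpha n$, so that $C_1$ must be split. Apply the hypothesis to $C_1$ to obtain an $\alpha$-balanced separation $(A',B')$ with $|A'\cap B'|=t<c$, and observe that for every subset $X\subseteq D$ which is a union of components of $H[D]$, the pair $(A,B):=(A'\cup X,\,B'\cup(D\setminus X))$ is a separation of $H$ with $A\cap B=A'\cap B'$, hence of order $<c$. A direct computation using $|A'\setminus B'|+|B'\setminus A'|=|C_1|-t$ and $|D|=n-|C_1|$ shows that the two $\alpha$-balance conditions on $(A,B)$ translate to requiring $|X|$ to lie in an explicit nonempty integer interval $[\ell,u]\subseteq[0,|D|]$; whenever neither endpoint is clipped to $\{0,|D|\}$, its length is at least $(2\alpha-1)n+t\ge(1-\alpha)n$, the last bound using $\alpha\ge\tfrac{2}{3}$.

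The main (and only) technical step is to realise such a value of $|X|$ as an actual subset sum over the component sizes of $H[D]$. Here I would sort the components of $H[D]$ by decreasing size $|D^{(1)}|\ge|D^{(2)}|\ge\dots$ and consider the prefix sums $0=S_0<S_1<\dots<S_r=|D|$, whose consecutive gaps are exactly the component sizes and are therefore bounded by $|D^{(1)}|\le|D|<(1-\alpha)n$ (using $|C_1|>\alpha n$). The interval $[\ell,u]$ then contains some $S_i$: either $S_0=0$ or $S_r=|D|$ already lies in $[\ell,u]$ when an endpoint is clipped, and otherwise the interval's length exceeds the maximum gap between consecutive prefix sums. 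Taking $X$ to be the union of the first $i$ components in this ordering supplies a valid choice, which together with the earlier case yields the required $\alpha$-balanced separation of $H$ of order less than $c$ and completes the proof.
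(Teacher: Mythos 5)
Your proof is correct, but it takes a genuinely different route from the paper. The paper argues by induction on the number of components of the subgraph $H$: it peels off the \emph{smallest} component $J$, either pairs $J$ against the rest (an order-$0$ separation) when $|V(H)\setminus V(J)|\le\tfrac23 n$, or else applies induction to $H-V(J)$ and absorbs $J$ into the side whose complement is already large, using $\alpha\ge\tfrac23$ only through the $\tfrac23 n$ threshold; the whole argument is four lines and the hypothesis is only invoked at the connected base case. Your argument is instead a direct, non-inductive construction with two cases: when every component has size at most $\alpha n$ you balance whole components greedily into two sides (getting an order-$0$ separation, with the $\alpha\ge\tfrac23$ hypothesis entering via the ``at most two components exceed $(2\alpha-1)n$'' count), and when one component $C_1$ exceeds $\alpha n$ you apply the hypothesis once, to $C_1$ alone, and then place the remaining components on the two sides via a prefix-sum/discrete intermediate value argument, using that they total less than $(1-\alpha)n$ while the admissible window for $|X|$ has length at least $(2\alpha-1)n\ge(1-\alpha)n$. (Your balance computations are in terms of $|A\setminus B|$ and $|B\setminus A|$; this is in fact the form of balance that the paper's own proofs, and the cited Dvo\v{r}\'ak--Norin theorem, actually verify, so the reading is consistent.) What your approach buys is a one-shot construction that invokes the hypothesis only on the single oversized component and yields an order-$0$ separation whenever no component needs splitting; what it costs is length and several auxiliary verifications (the greedy balance claim, nonemptiness of the window $[\ell,u]$, and the gap-versus-length step) that you only sketch, though each is routine and completable as indicated. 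The paper's induction avoids all of this bookkeeping at the price of being less explicit about which component gets split.
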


\begin{proof}
    Consider a subgraph $H$ of $G$. Let $n:=|V(H)|$. We prove that $H$ has an $\alpha$-balanced separation of order less than $c$ by induction on the number of components of $H$. If $H$ is connected, then the claim holds by assumption. So assume that $H$ has at least two components and let $J$ be the smallest component of $H$. If $|V(H)\setminus V(J)|\leq \frac{2}{3}n$, then $(V(H)\setminus V(J),V(J))$ is an $\alpha$-balanced separation of $H$ of order $0$. So assume that $|V(H)\setminus V(J)|\geq \frac{2}{3}n$. By induction, $H-V(J)$ has an $\alpha$-balanced separation $(A,B)$ of order less than $c$ such that $|A|\geq \frac{n}{3}$. Therefore, since $|(B\cup V(J))\setminus A|\leq \frac{2n}{3}\leq \alpha n$ and $A\cap V(J)=\emptyset$, it follows that $(A,B\cup V(J))$ is an $\alpha$-balanced separation of $H$ of order less than $c$, as required.
\end{proof}
 
The next lemma is the heart of this paper. 

\begin{lem}\label{LinearBalancedSep}
    For~${c \geq 1}$, every graph~$G$ with growth~${f_G(r) \leq cr}$ satisfies 
    \[
        \sep_{(1-\frac{1}{4c})}(G)< 2c.
    \]
\end{lem}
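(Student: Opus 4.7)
By~\cref{SeparationConnected}, it suffices to show that every connected subgraph $H$ of $G$ on $n := |V(H)|$ vertices has a $(1-\tfrac{1}{4c})$-balanced separation of order less than $2c$. The plan is to use BFS: pick any vertex $v \in V(H)$ and let $L_r := \{u \in V(H) : d_H(v,u) = r\}$ and $B_r := L_0 \cup \cdots \cup L_r$ denote the BFS layers and balls around $v$. Since $B_r$ is a subgraph of $G$ of radius at most $r$, the growth hypothesis gives $|B_r| \leq cr$; conversely, each BFS layer up to the eccentricity of $v$ contains at least one vertex, so $|B_r| \geq r+1$ for every such $r$.

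For each~$r$, the pair $(B_r,\,V(H)\setminus B_{r-1})$ is a separation of $H$ with separator $L_r$ and with proper parts of sizes $|B_{r-1}|$ and $n - |B_r|$; it is $(1-\tfrac{1}{4c})$-balanced precisely when $|B_{r-1}| \leq (1-\tfrac{1}{4c})n$ and $|B_r| \geq n/(4c)$. Setting $r_1 := \min\{r : |B_r| \geq n/(4c)\}$ and $r_2 := \max\{r : |B_r| \leq (1-\tfrac{1}{4c})n\}$, the indices $r$ satisfying both conditions form the interval $[r_1, r_2+1]$; this is non-empty since $r_2 \leq r_1 - 2$ would force $|B_{r_2+1}| \leq |B_{r_1-1}| < n/(4c)$, contradicting $|B_{r_2+1}| > (1-\tfrac{1}{4c})n$. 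So it remains to find some $r$ in this interval with $|L_r| < 2c$.

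I would prove this by contradiction: suppose $|L_r| \geq 2c$ for every $r \in [r_1, r_2+1]$. Telescoping the layer sizes and bounding via growth gives
\[
2c\,(r_2 - r_1 + 2) \;\leq\; \sum_{r=r_1}^{r_2+1}|L_r| \;=\; |B_{r_2+1}| - |B_{r_1-1}| \;\leq\; c(r_2+1),
\]
so $r_2 \leq 2r_1 - 3$. Combining $|B_{r_1-1}| \geq r_1$ with $|B_{r_1-1}| < n/(4c)$ forces $r_1 < n/(4c)$, while $|B_{r_2+1}| > (1-\tfrac{1}{4c})n$ together with $|B_{r_2+1}| \leq c(r_2+1)$ forces $r_2+1 > n(4c-1)/(4c^2)$. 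Substituting these into $r_2 + 1 \leq 2r_1 - 2$ and clearing denominators reduces to $n(2c-1) + 8c^2 < 0$, which is absurd for $c \geq 1$. Hence some $r \in [r_1, r_2+1]$ has $|L_r| < 2c$, producing the desired separation.

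The hard part will be the numerical calibration of the parameters so the counting argument closes: the total mass of the ``middle-range'' layers is controlled from above only by the growth bound $c(r_2+1)$, and from below by $2c$ times the \emph{number} of such layers. Both an upper bound on $r_1$ (coming from the observation that every BFS layer is non-empty) and a matching lower bound on $r_2$ (coming from applying growth at $r_2+1$) are therefore needed; without these, a single ``giant'' BFS layer could swallow all the mid-range mass and break the counting.
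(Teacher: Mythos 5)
Your proof is correct and follows essentially the same route as the paper's: a BFS layering of a connected subgraph, the growth bound forcing a layer of size less than $2c$ at a position where the prefix/suffix separation is $(1-\tfrac{1}{4c})$-balanced, and then \cref{SeparationConnected}. The only difference is bookkeeping: the paper takes a median small layer among the at least $p/2$ layers of size less than $2c$, whereas you locate a small layer inside the balanced window $[r_1,r_2+1]$ via a counting contradiction; both close correctly (apart from the trivial one-vertex subgraph case, which both arguments leave implicit).
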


\begin{proof}
    Consider a connected subgraph $H$ of $G$ and note that $f_H(r)\leq f_G(r)\leq cr$. Let $n:=|V(H)|$. Let $v\in V(H)$, let $p:=\max\{\dist_H(v,w):w\in V(H)\}$, 
    and let $V_i:=\{w\in V(H):\dist_H(v,w)=i\}$ for $i\in[0,p]$.  
    Let $R:=\{i\in [p]: |V_i| \geq 2c\}$ and $S:=\{i\in [p]:|V_i| < 2c\}$. 
    Since $H$ has radius at most $p$, 
    \[
    2c|R|\leq \sum_{i\in R}|V_i|\leq n \leq cp,
    \]
    Therefore  $|R| \leq\frac{p}{2}$ and $|S| = p - |R| \geq \frac{p}{2}$. Let $j$ be the minimum element of $S$ such that $|S\cap [0,j]|\geq \frac{|S|}{2}$. Let $A := \bigcup_{i\in [0,j]}V_i$ and $B := \bigcup_{i\in [j,p]}V_i$. Then $|A \cap B|=|V_j| < 2c$ and
    \[
        |A| 
        \geq \frac{|S|}{2}\geq\frac{p}{4}\geq \frac{n}{4c} 
        \quad \text{and} \quad
        |B| 
        \geq \frac{|S|}{2}\geq \frac{p}{4}\geq \frac{n}{4c}.
   \]
   Since $V_j$ separates $A\setminus B$ and $B\setminus A$, there is no edge of $H$ with one end in $A \setminus B$ and the other in $B \setminus A$. 
   Moreover, since $|A| \geq \frac{n}{4c}$ and $|B| \geq \frac{n}{4c}$, it follows that $|A\setminus B|\leq (1- \frac{1}{4c})n$ and $|B\setminus A|\leq (1- \frac{1}{4c})n$. 
   Thus $(A,B)$ is a $(1- \frac{1}{4c})$-balanced separation of $H$ of order less than~$2c$. Since $1-\frac{1}{4c}\geq \frac{2}{3}$, the result follows by \cref{SeparationConnected}.
\end{proof}

We are now ready to prove our main theorem which we restate for convenience.

\Main*

\begin{proof}
    Let~$G$ be a graph with growth~${f_G(r) \leq cr}$. 
    By \cref{BalancedSepAlpha,LinearBalancedSep}, 
    \[\sep_{2/3}(G)\leq \left\lceil \log_{(1-\frac{1}{4c})}\left(\tfrac{2}{3}\right)\right\rceil \sep_{(1-\frac{1}{4c})}(G)\leq \left\lceil \log_{(1-\frac{1}{4c})}\left(\tfrac{2}{3}\right)\right\rceil 2c.\]
    Note that $\log_{(1-\frac{1}{4c})}(\tfrac{2}{3})=\frac{\log(\frac{3}{2})}{\log(4c)-\log(4c-1)}$. 
    Additionally, 
    by the mean value theorem there is some $x\in (4c-1,4c)$ such that $x^{-1}=\log(4c)-\log(4c-1)$. 
    Combining these observations with \cref{BalancedSep} yields
    \[
        \tw(G)  \leq 15\sep_{2/3}(G) \leq 30\left\lceil \log\left(\tfrac{3}{2}\right)x\right\rceil c \leq 30\left(\log\left(\tfrac{3}{2}\right)4c+1\right)c\leq 49 c^2+30c.\qedhere
    \]
\end{proof}

We conclude this section by showing that the function $49c^2+30c$ in \cref{Main} cannot be replaced by any function in~${o(c\log c)}$. 
For a vertex~$v$ in a graph~$G$, the \defn{$r$-ball} at~$v$ is the set \defn{$B_r(v)$}${\;:=\{w\in V(G) \colon \dist_G(v,w)\leq r\}}$.

\begin{lem}\label{NonLinearGrowth}
There is an absolute constant $\beta>0$ such that, for every integer $k\geq 2$, there is a cubic graph $G$ with treewidth at least $k$ and growth $f_G(r)\leq \frac{\beta kr}{\log k}$. 
\end{lem}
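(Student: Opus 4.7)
The plan is to take $G$ to be a cubic expander on appropriately many vertices. It is standard that there is a constant $c_1>0$ and, for every sufficiently large integer $n$, a $3$-regular graph $H_n$ on $n$ vertices with $\tw(H_n)\ge c_1 n$: for instance, random cubic graphs are almost surely bounded-degree expanders, and any bounded-degree expander admits only balanced separators of size $\Omega(n)$, which combined with the separator-based lower bound on treewidth gives $\tw(H_n)=\Omega(n)$. Alternatively one could cite an explicit construction such as LPS Ramanujan graphs (together with a degree-reduction gadget to make them cubic).

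Given $k\ge 2$, I would set $n$ to be the least integer with $c_1 n\ge k$, and put $G:=H_n$, so that $\tw(G)\ge k$ and $n=\Theta(k)$. Since $G$ is $3$-regular, the ball around any vertex $v$ satisfies $|B_r(v)|\le 1+3+6+\dots+3\cdot 2^{r-1}\le 3\cdot 2^r$. Any subgraph of $G$ of radius at most $r$ is contained in $B_r(v)$ for some vertex $v$, so $f_G(r)\le 3\cdot 2^r$; trivially $f_G(r)\le n$, and therefore
\[
   f_G(r)\ \le\ \min\bigl\{3\cdot 2^r,\ n\bigr\}.
\]

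It remains to verify that this minimum is at most $\beta kr/\log k$ for some absolute constant $\beta$. I would split on whether $r\ge \log_2 n$ or $r<\log_2 n$. In the former case $f_G(r)\le n=\Theta(k)$ while $r\ge \log_2 n=\Theta(\log k)$, so $n\le \beta kr/\log k$ is immediate. In the latter case, the function $2^r/r$ is non-decreasing for $r\ge 2$, so on the range $1\le r\le \log_2 n$ the quantity $3\cdot 2^r/r$ attains its maximum at $r=\log_2 n$, where it equals $3n/\log_2 n=\Theta(k/\log k)$; this forces $3\cdot 2^r\le \beta kr/\log k$. Finitely many small values of $k$ (e.g.\ $k\in\{2,3\}$, for which one can take $G=K_4$) can be absorbed into the constant~$\beta$.

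The main obstacle is the input: producing a family of cubic graphs with treewidth $\Omega(n)$. This is standard but requires either a probabilistic argument (sampling a random cubic graph, showing it is an expander, and deducing linear treewidth from expansion) or an explicit algebraic construction followed by a degree-reduction step. Once such a family is in hand, the remainder of the proof reduces to the elementary two-case estimate above.
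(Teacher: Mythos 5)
Your proposal is correct and follows essentially the same route as the paper: take a cubic graph on $n=\Theta(k)$ vertices with treewidth $\Omega(n)$, bound balls by $\min\{n,\,3\cdot 2^r\}$, and observe that this minimum divided by $r$ is maximised when the two terms balance, giving $O(k/\log k)$. The only difference is the source of the linear-treewidth cubic graphs (the paper cites Grohe and Marx, while you invoke standard expander facts, which you correctly flag as the one ingredient needing a citation or proof), and your explicit two-case analysis is just a slightly more detailed version of the paper's one-line optimisation.
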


\begin{proof}
\citet{GM09} proved there is an absolute constant $\alpha\in (0,1)$ such that for every even integer $n\geq 4$ there is an $n$-vertex cubic graph with treewidth at least $\alpha n$. Apply this result with $n:= \max\{2 \ceil{\frac{k}{2\alpha}},4\}$ to obtain a cubic graph $G$ with treewidth at least $k$. Let $v\in V(G)$ and $r\in \NN$, and consider the ball $B_r(v)$. Since $G$ is cubic, $\frac{|B_r(v)|}{r}\leq \min\{\frac{n}{r},3\cdot\frac{2^r}{r}\}$, which is maximised when $n=3\cdot2^r$. Thus  $\frac{|B_r(v)|}{r}\leq \frac{n}{\log_2(n/3)}\leq \frac{\beta k}{\log k}$, for some absolute constant $\beta$, as required.
\end{proof}

\begin{restatable}{thm}{LowerBound}\label{LowerBound}
If $g$ is any function such that for any $c\geq 1$, every graph $G$ of growth $f_G(r)\leq cr$ has treewidth at most $g(c)$, then $g(c)\in\Omega(c\log c)$.
\end{restatable}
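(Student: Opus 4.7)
The plan is to invoke \cref{NonLinearGrowth} as a black box. For each integer $k \geq 2$ it supplies a cubic graph $G_k$ with $\tw(G_k) \geq k$ and $f_{G_k}(r) \leq c_k r$, where $c_k := \beta k / \log k$. Since $G_k$ has growth at most $c_k r$, and hence also at most $cr$ for every $c \geq c_k$, the hypothesis on $g$ yields $g(c) \geq \tw(G_k) \geq k$ whenever $c \geq c_k$. The task is therefore to invert the assignment $k \mapsto c_k$ and express $k$ as a function of $c$, in order to deduce a lower bound on $g(c)$ of order $c \log c$.

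For all sufficiently large $c$ I would take $k := \lfloor c \log c / (2\beta) \rfloor$. Since $k \geq \sqrt{c}$ for large $c$, one has $\log k \geq \tfrac{1}{2} \log c$, and consequently
\[
c_k \;=\; \frac{\beta k}{\log k} \;\leq\; \frac{\beta \bigl( c \log c / (2\beta) \bigr)}{\log c / 2} \;=\; c.
\]
Applying the observation from the previous paragraph at this particular $k$ gives $g(c) \geq k \geq c \log c / (2\beta) - 1$, which is $\Omega(c \log c)$ as desired. Small values of $c$ may be ignored because the claim is asymptotic.

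The argument is essentially an explicit inversion, so no deep obstacle is expected. The only minor subtlety is that $g$ is not assumed to be monotone, which is why one must evaluate the hypothesis at the larger parameter $c$ rather than at $c_k$; this is harmless because the growth bound $f_{G_k}(r) \leq c_k r \leq cr$ persists. Beyond this, everything reduces to the short estimate $\log k \geq \tfrac{1}{2} \log c$ used above.
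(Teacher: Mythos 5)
Your proposal is correct and follows essentially the same route as the paper: both invoke \cref{NonLinearGrowth} and invert the relation between $k$ and $c_k=\beta k/\log k$. The only (harmless) difference is bookkeeping: you fix $c$ and choose $k:=\lfloor c\log c/(2\beta)\rfloor$, yielding $g(c)\geq c\log c/(2\beta)-1$ for all sufficiently large $c$, whereas the paper fixes $k$ with $\log k\geq\beta$ and sets $c:=\beta k/\log k$, obtaining the same $\Omega(c\log c)$ bound along that sequence of values of $c$.
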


\begin{proof}
By \cref{NonLinearGrowth}, there is an absolute constant $\beta>0$ such that for every $k\in \NN$ there is a cubic graph $G$ with treewidth at least $k$ and growth $f_G(r)\leq \frac{\beta kr}{\log k}$. 
Let $k$ be sufficiently large so that $\log k \geq \beta$. 
Let $c:= \frac{\beta k}{\log k}$.
It follows that $k\beta \geq c\log c$ and $f_G(r)\leq cr$. Hence $\frac{c\log c}{\beta} \leq k\leq \tw(G) \leq g(c)$, and $g(c)\in\Omega(c\log c)$, as desired. 
\end{proof}

\section{Growth and Minors}
\label{GrowthMinors}

This section studies growth in proper minor-closed graph classes. A graph $H$ is a \defn{minor} of a graph $G$ if $H$ is isomorphic to a graph obtained from a subgraph of $G$ by contracting edges. A graph class $\GG$ is \defn{minor-closed} if for every $G\in\GG$ every minor of $G$ is also in $\GG$. A minor-closed class $\GG$ is \defn{proper} if some graph is not in $\GG$. A graph parameter $\lambda$ is \defn{minor-monotone} if $\lambda(H)\leq\lambda(G)$ whenever $H$ is a minor of $G$. 

Grid graphs are the key examples here. For $n\in\NN$, the \defn{$n \times n$ grid} is the graph with vertex set $\{(v_1,v_2):v_1,v_2\in[n]\}$ where $(v_1,v_2)$ and $(u_1,u_2)$ are adjacent if $v_1=u_1$ and $|v_2-u_2|=1$, or $v_2=u_2$ and $|v_1-u_1|=1$. This graph has treewidth $n$ (see \citep{HW17}), and is a canonical example of a graph with large treewidth in the sense that every graph $G$ with sufficiently large treewidth contains the $n\times n$ grid as a minor~\citep{RS-V}. Since treewidth is minor-monotone, \cref{Main} implies that any graph $G$ with growth $f_G(r)\leq cr$ cannot contain the $n\times n$ grid as a minor, where $n=\lceil 49c^2+30c\rceil$. We prove this directly with $n=\ceil{2c}$.
 
\begin{restatable}{thm}{GrowthGridMinor}\label{thm:GrowthGridMinor}
    For any~${c \geq 1}$, every graph~$G$ with growth~${f_G(r) \leq cr}$ does not contain the $\ceil{2c}\times \ceil{2c}$ grid as a minor.
\end{restatable}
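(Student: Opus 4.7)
The plan is a proof by contradiction, directly exhibiting a subgraph of $G$ that violates the linear growth bound. Suppose $G$ has growth $f_G(r)\leq cr$ and contains the $n\times n$ grid as a minor with $n:=\lceil 2c\rceil$, realized by branch sets $(X_{(i,j)})_{(i,j)\in[n]^2}$. I would first choose the minor model so that $\sum_{(i,j)}|X_{(i,j)}|$ is minimised; after fixing one connecting edge of $G$ per grid edge, each $X_{(i,j)}$ can then be taken to be a minimum Steiner tree in $G$ connecting its at most four portals. Let $H\subseteq G$ be the resulting connected subgraph (branch sets plus the chosen connecting edges), let $m:=\lceil n/2\rceil$, and fix a vertex $v\in X_{(m,m)}$.

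The first key step is to define the candidate subgraph. For $r\in\NN$ with $r<m$, let $S_r$ be the subgraph of $H$ consisting of the union of those branch sets $X_{(i,j)}$ with $d_{\mathrm{grid}}((m,m),(i,j))\leq r$ together with the connecting edges among them. Since the $L^1$-disk of grid vertices within distance $r$ of $(m,m)$ is connected, $S_r$ is connected, and it involves $2r^2+2r+1$ branch sets, giving $|V(S_r)|\geq 2r^2+2r+1$ (each branch set contributes at least one vertex).

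The second key step is to bound $\mathrm{rad}(S_r)$ from above. The eccentricity of $v$ in $S_r$ is controlled by walks that traverse at most $r$ consecutive branch sets, whose internal contributions are constrained by the minimality of the model and by the linear growth bound on $G$ applied to the branch sets themselves — a branch set whose intrinsic diameter were too large would itself, together with a suitable neighbourhood, witness a subgraph with radius-to-size ratio exceeding $c$. I expect this to yield $\mathrm{rad}(S_r)\leq r+O(1)$. Combined with growth, $|V(S_r)|\leq c\cdot\mathrm{rad}(S_r)\leq c(r+O(1))$, which contradicts $|V(S_r)|\geq 2r^2+2r+1$ once $r$ is chosen of order $c$: setting $r=m-1$, since $n\geq 2c$ we have $m-1\geq c-O(1)$, and then $2(m-1)^2+2(m-1)+1$ exceeds $c(m-1+O(1))$.

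The main obstacle, and the delicate technical heart of the argument, is the upper bound on $\mathrm{rad}(S_r)$: without some control on the intrinsic diameters of the branch sets, a single branch set could absorb the grid-distance savings and inflate $\mathrm{rad}(S_r)$ well beyond $r$. Resolving this requires the interplay between minimality of the chosen model — which forces each branch set to be a Steiner tree with at most four terminals — and the global growth bound on $G$, which controls how large such a Steiner tree can be within its ambient neighbourhood. An alternative route, should this direct approach prove hard to implement cleanly, is to invoke \cref{LinearBalancedSep} applied to $H$ and argue that a $(1-1/(4c))$-balanced separator of order $<2c$ in $H$ must transfer, via the branch-set structure, to a separator of the $n\times n$ grid that contradicts its isoperimetric behaviour for $n=\lceil 2c\rceil$.
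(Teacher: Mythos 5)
There is a genuine gap at exactly the step you flag as the technical heart: the bound $\mathrm{rad}(S_r)\leq r+O(1)$ is not available, and the mechanism you propose for it (minimality of the model plus the growth hypothesis) cannot deliver it. Minimality only forces each branch set to be a Steiner tree with at most four terminals, and such a tree can be a path of arbitrary length; long paths are perfectly compatible with linear growth, since a path has growth $2r+1$, so neither hypothesis bounds the diameter of a branch set by a constant. Concretely, take the $n\times n$ grid with every edge subdivided $t$ times: this graph has growth at most $\beta n r$ for an absolute constant $\beta$, uniformly in $t$, yet in any (even minimal) model of the $n\times n$ grid the branch sets have diameter of order $t$, so your $S_r$ has radius of order $rt$ rather than $r+O(1)$, and the comparison of $|V(S_r)|\approx r^2 t$ with $c\cdot\mathrm{rad}(S_r)\approx nrt$ yields no contradiction. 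Your fallback via \cref{LinearBalancedSep} suffers from a related measure mismatch: a separation of $H$ that is $(1-\frac{1}{4c})$-balanced with respect to $|V(H)|$ need not be balanced with respect to the branch sets, because a few huge branch sets can absorb almost all of $|V(H)|$, so the separator does not directly transfer to an isoperimetric statement about the $\ceil{2c}\times\ceil{2c}$ grid.

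The paper's proof sidesteps branch-set diameters entirely by comparing sizes of rows and columns rather than grid distances. With $R_i$ (resp.\ $C_i$) the union of the branch sets in row $i$ (resp.\ column $i$), choose $x$ so that $s:=|R_x|$ is minimum and, without loss of generality, $s\leq|C_i|$ for all $i$, and fix $v\in R_x$. Since $G[R_x]$ is connected on $s$ vertices, $R_x\subseteq B_{s-1}(v)$; each column is connected, meets $R_x$, and has at least $s$ vertices, so each of the $2c$ pairwise disjoint columns contributes at least $s$ vertices to $B_{2s-1}(v)$ (either the whole column lies inside, or it meets every distance level from $s-1$ up to $2s-1$). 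This gives $2cs\leq|B_{2s-1}(v)|\leq c(2s-1)$, a contradiction. The moral is that the radius scale in the argument must be tied to the \emph{sizes} of the branch sets (here, the size $s$ of the smallest row), not to the combinatorial grid distance $r$; to repair your approach you would need to replace $\mathrm{rad}(S_r)\leq r+O(1)$ by such a size-based bound, which is in effect what the row/column comparison accomplishes.
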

 
\begin{proof}
It is sufficient to consider the case when $2c\in \NN$. 
Suppose for contradiction that $G$ is a graph with growth $f_G(r)\leq cr$ that contains a $2c\times 2c$ grid as a minor. Thus, there is a collection $\mathcal{H}:=\{H_{i,j}:(i,j)\in [2c]^2\}$ of pairwise vertex-disjoint connected subgraphs of $G$ such that for every $i\in [2c]$ and $j\in [2c-1]$ there is an edge between $H_{i,j}$ and $H_{i,j+1}$ and an edge between $H_{j,i}$ and $H_{j+1,i}$. 
For each $i\in [2c]$, let $R_i:=\bigcup_{j\in [2c]}V(H_{i,j})$ and $C_i:=\bigcup_{j\in [2c]}V(H_{j,i})$. Without loss of generality, there exists $x\in [2c]$ such that $s:=|R_x|\leq |C_i|$ for all $i\in [2c]$. Let $v$ be a vertex in $R_x$.

We claim that $|B_{2s-1}(v)\cap C_i|\geq s$ for each $i\in[2c]$. Since $G[R_x]$ is connected, $R_x\subseteq B_{s-1}(v)$, so $B_{s-1}(v)$ contains a vertex of $C_i$. If $C_i\subseteq B_{2s-1}(v)$ then $|B_{2s-1}(v)\cap C_i|= |C_i|\geq |R_x|=s$, as claimed. Otherwise, since $C_i$ is connected, $C_i$ intersects $B_j$ for each $j\in[s-1,2s]$, implying 
$|B_{2s-1}(v)\cap C_i|\geq s$, which proves the claim. 
Since $C_i$ is disjoint from $C_{i'}$ for all distinct $i,i'\in [2c]$, we find that $2cs\leq |B_{2s-1}(v)|\leq c(2s-1)$, which is the desired contradiction.
\end{proof}

\citet{DH08} showed that for any fixed graph~$H$, every $H$-minor-free graph~$G$ with treewidth~$k$ contains an~${\Omega(k) \times \Omega(k)}$ grid as a minor (see \citep{KK20} for explicit bounds). 
In this case, \cref{thm:GrowthGridMinor} implies the following improvement on \cref{Main}.
 
\begin{cor}\label{HMinorFreeGrowth}
    For any~${c \geq 1}$ and any fixed graph~$H$, every $H$-minor-free graph~$G$ with growth~${f_G(r) \leq cr}$ has treewidth at most~${O(c)}$. 
    \qed
\end{cor}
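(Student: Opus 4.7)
The plan is to derive this corollary as an immediate consequence of \cref{thm:GrowthGridMinor} together with the Demaine--Hajiaghayi grid-minor theorem referenced just before the statement. The strategy is purely quantitative: since treewidth is the quantity we want to bound and since in a proper minor-closed class treewidth is controlled by the size of the largest grid minor, it suffices to feed the grid-minor obstruction from \cref{thm:GrowthGridMinor} into the minor-closed grid-minor theorem.

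Concretely, I fix an $H$-minor-free graph $G$ with growth $f_G(r)\leq cr$ and set $k := \tw(G)$. By the Demaine--Hajiaghayi theorem cited in the text (with the explicit bound from \citep{KK20}), there is a constant $\alpha_H>0$ depending only on $H$ such that $G$ contains the $m\times m$ grid as a minor with $m\geq \alpha_H k$. On the other hand, \cref{thm:GrowthGridMinor} asserts that $G$ does not contain the $\lceil 2c\rceil\times \lceil 2c\rceil$ grid as a minor. Comparing the two bounds gives $\alpha_H k < 2c+1$, and since $c\geq 1$ we get $k \leq 3c/\alpha_H = O(c)$, with the hidden constant depending on $H$.

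There is essentially no obstacle: the two ingredients have already been established (or cited) earlier in the paper, and the combination is a one-line calculation. The only subtlety is that the hidden constant in the $O(c)$ depends on $H$, which is unavoidable because $\alpha_H$ degrades as $|V(H)|$ grows. If one wanted to state an explicit bound, one would simply substitute the best available dependence of $\alpha_H$ on $H$ from \citep{KK20}; otherwise the qualitative $O(c)$ statement suffices.
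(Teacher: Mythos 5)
Your proposal is correct and is exactly the paper's argument: the paper proves \cref{HMinorFreeGrowth} by combining \cref{thm:GrowthGridMinor} with the Demaine--Hajiaghayi grid-minor theorem for $H$-minor-free graphs (cited just before the corollary, with explicit bounds in \citep{KK20}), which is why the statement carries only a \qed. Your quantitative comparison $\alpha_H \tw(G) \leq m < \lceil 2c\rceil$ is the intended one-line calculation, including the (unavoidable) dependence of the hidden constant on $H$.
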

 
In the case of planar graphs, \citet{RST-JCTB94} showed that every planar graph containing no~${n \times n}$-grid minor has treewidth at most $6n-5$. 
\cref{thm:GrowthGridMinor} thus implies:

\begin{cor}
    \label{PlanarGrowth}
    For any~${c \in \NN}$, every planar graph~$G$ with growth~${f_G(r) \leq cr}$ has treewidth at most~${12c+1}$. 
    \qed
\end{cor}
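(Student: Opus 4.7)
The plan is to chain together two ingredients: \cref{thm:GrowthGridMinor} from this paper, and the planar grid-minor theorem of \citet{RST-JCTB94}, which asserts that every planar graph with no $n \times n$ grid minor has treewidth at most $6n - 5$. Since $c \in \NN$, we have $\lceil 2c \rceil = 2c$, so \cref{thm:GrowthGridMinor} directly implies that $G$ contains no $2c \times 2c$ grid as a minor. Substituting $n = 2c$ into the Robertson--Seymour--Thomas bound then gives $\tw(G) \leq 6(2c) - 5 = 12c - 5$, which is well within the claimed bound of $12c+1$.

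I do not foresee any real obstacle. The corollary is a two-step consequence of an already-proven theorem of this paper together with a standard external result, and the mild slack between $12c-5$ and $12c+1$ is harmless; it presumably reflects a choice of notational convenience rather than a genuine loss. The only small point to check is the rounding, which is trivial here because the hypothesis restricts $c$ to positive integers and so $\lceil 2c \rceil = 2c$ exactly; an analogous statement for real $c \geq 1$ still follows at the cost of using $n = \lceil 2c \rceil$ in the final step.
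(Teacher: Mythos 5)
Your proposal is correct and matches the paper's intended argument exactly: \cref{PlanarGrowth} is stated as an immediate consequence of \cref{thm:GrowthGridMinor} together with the Robertson--Seymour--Thomas bound, and your computation $6(2c)-5 = 12c-5 \leq 12c+1$ is fine (the paper's $12c+1$ simply absorbs the bound $\lceil 2c\rceil \leq 2c+1$ that would be needed for non-integer $c$).
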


Recall that \cref{LowerBound} provides an~${\Omega(c \log c)}$ lower bound on the treewidth of graphs~$G$ with growth~${f_G(r) \leq cr}$. 
Thus to conclude the~${O(c)}$ upper bounds in \cref{HMinorFreeGrowth,PlanarGrowth}, it is essential to make some assumption such as excluding a fixed minor.

\section{Product Structure}
\label{ProductStructure}

Much of the research on the growth of finite graphs has centred around polynomial growth. In this setting, \citet{KL07} showed that every graph $G$ of growth $f_G(r)\leq r^d$ (for $r\geq 2$) is isomorphic to a subgraph of the strong product of $O(d\log d)$ sufficiently long paths. Here  the \defn{strong product} $ G \boxtimes H $ of  graphs $ G $ and $ H $ 
is the graph with vertex-set $ V(G) \times V(H) $ with an edge between two vertices $ (v,w) $ and $ (v',w') $ if  $ vv' \in E(G) $ and $ w=w'$ or $ ww' \in E(H) $, or $ v=v' $ and $ww'\in E(H)$. Note that $G\boxtimes K_t$ is simply the graph obtained from $G$ by replacing each vertex of $G$ by a copy of $K_t$ and replacing each edge of $G$ by $K_{t,t}$ between the corresponding copies of $K_t$, sometimes called a \defn{blow-up} of $G$. The following result, due to a referee of \cite{DO95} and refined in \citep{DW,Wood09}, allows us to describe graphs of linear growth as subgraphs of blow-ups of trees. 

\begin{lem}[\cite{DO95,Wood09,DW}]
\label{TreeProduct}
For $k,\Delta\in\NN$, any graph with treewidth less than $k$ and maximum degree $\Delta$ is isomorphic to a subgraph of\/ $T\boxtimes K_{18k\Delta}$ for some tree $T$.
\end{lem}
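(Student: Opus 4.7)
The plan is to factor the statement through the notion of a \emph{tree-partition} of a graph $G$: a tree $T$ together with a partition $(V_x : x \in V(T))$ of $V(G)$ into (possibly empty) parts indexed by the nodes of $T$, such that for every edge $uv \in E(G)$, either both ends lie in a common part, or they lie in parts $V_x, V_y$ with $xy \in E(T)$. The \emph{width} of the tree-partition is $\max_{x \in V(T)} |V_x|$. The reduction from the lemma to a bound on tree-partition width is immediate: if $G$ admits a tree-partition of width at most $w$ with tree $T$, then $G$ embeds in $T \boxtimes K_w$ by sending each part $V_x$ injectively into $\{x\} \times V(K_w)$, since every edge within a part becomes an edge of some clique $\{x\} \times K_w$ and every edge between $V_x$ and $V_y$ with $xy \in E(T)$ becomes an edge of $T \boxtimes K_w$ of the form $((x,i),(y,j))$.

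Thus the lemma reduces to showing that every graph with treewidth less than $k$ and maximum degree $\Delta$ has a tree-partition of width at most $18k\Delta$. This is precisely the Ding--Oporowski theorem, with the constant sharpened by Wood, and I would invoke it directly rather than reprove it. Its proof starts from a tree-decomposition of $G$ of width less than $k$ and iteratively refines it into a tree-partition: at each bag one selects a set of representative vertices, assigns every vertex to exactly one tree node, and then splits nodes of the tree according to the ``cluster'' structure forced by the maximum degree $\Delta$.

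The main obstacle, and the reason the bound $18k\Delta$ is nontrivial, is that there is no direct way to pass from a tree-decomposition (with overlapping bags) to a tree-partition (with disjoint parts) without incurring a factor that depends on the maximum degree. The degree parameter is genuinely needed to control how many vertices can accumulate at any single node after the refinement, and balancing this accumulation against the cost of splitting bags is the technical heart of the Ding--Oporowski--Wood argument; once that quantitative bound is accepted, the lemma itself follows from the elementary embedding into $T \boxtimes K_{18k\Delta}$ described above.
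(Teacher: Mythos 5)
Your proposal is correct and matches the paper's treatment: the paper does not reprove this lemma either, but cites it as the tree-partition-width bound of Ding--Oporowski as refined by Wood and Distel--Wood, combined with exactly the elementary observation you give that a tree-partition of width $w$ over a tree $T$ yields a subgraph embedding into $T\boxtimes K_w$. Nothing further is needed.
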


A graph $G$ with growth $f_G(r)\leq cr$ has maximum degree at most $c-1$. Thus the following result\footnotemark\ is a consequence of \cref{Main,TreeProduct}. 

\begin{thm}\label{ProductStructureLinear}
For any~${c \geq 1}$, every graph~$G$ with growth~${f_G(r) \leq cr}$ is isomorphic to a subgraph of\/~${T \boxtimes K_{\lfloor 882c^3 \rfloor}}$ for some tree~$T$.
\end{thm}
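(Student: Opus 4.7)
The plan is to deduce \cref{ProductStructureLinear} immediately by feeding the conclusion of \cref{Main} into \cref{TreeProduct}; the paragraph preceding the theorem already points out both of the facts that are needed. First I would record the maximum degree bound: applying the growth hypothesis at $r=1$ shows that, for any vertex $v \in V(G)$, the star formed by $v$ and its neighbours is a subgraph of radius~$1$ on $\deg_G(v)+1$ vertices, hence $\deg_G(v)+1 \leq f_G(1) \leq c$, so $\Delta(G) \leq c-1$. Combined with $\tw(G) \leq 49c^2 + 30c$ from \cref{Main}, this puts us in the hypothesis of \cref{TreeProduct} with $k := \lceil 49c^2 + 30c\rceil + 1$ and $\Delta := c-1$, producing a tree $T$ such that $G$ is isomorphic to a subgraph of $T \boxtimes K_{18k\Delta}$.

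The only remaining step is to verify that $18k\Delta \leq \lfloor 882c^3 \rfloor$. Since $18 \cdot 49 = 882$, expanding $18(49c^2 + 30c + 1)(c-1)$ gives $882c^3 - 342c^2 - 522c - 18$, so the bound is comfortably satisfied for all $c \geq 1$, with slack to absorb the rounding in the definition of $k$. Finally, since $K_{18k\Delta}$ embeds in $K_{\lfloor 882c^3 \rfloor}$, the strong product $T \boxtimes K_{18k\Delta}$ embeds in $T \boxtimes K_{\lfloor 882c^3\rfloor}$, as required.

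There is no genuine obstacle here: \cref{Main} and \cref{TreeProduct} do all of the structural work. The only point requiring any care is the degenerate regime $c < 2$, where $\Delta(G) \leq c-1 < 1$ forces $\Delta(G) = 0$, so $G$ is edgeless and $\cref{TreeProduct}$ does not formally apply; in this case $G$ embeds trivially as a subgraph of a sufficiently long path, and every path is a subgraph of $T \boxtimes K_{\lfloor 882c^3\rfloor}$ for $T$ itself a suitable path.
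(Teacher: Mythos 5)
Your proposal is correct and follows exactly the route the paper intends: combine the degree bound $\Delta(G)\leq c-1$ (from $f_G(1)\leq c$) with \cref{Main} and feed both into \cref{TreeProduct}, then check $18k\Delta\leq\lfloor 882c^3\rfloor$. Your explicit arithmetic and the separate treatment of the edgeless regime $c<2$ are fine (and slightly more careful than the paper, which states the theorem as an immediate consequence without spelling these details out).
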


\footnotetext{It follows from a result of \citet{KuskeLohrey05} (or \citet{HMSTW} in the countable case) that \cref{ProductStructureLinear} also holds for  infinite graphs.} 

The graph $T\boxtimes K_{\lfloor 882c^3\rfloor}$ preserves the boundedness of the treewidth of $G$. However, the growth of $T\boxtimes K_{\lfloor 882c^3\rfloor}$ is at least the growth of $T$ which can be exponential, for example if $T$ is a complete binary tree. This leads us to conjecture the following rough characterisation of graphs of linear growth. 

\begin{conj}
\label{LinearGrowthConj}
There exist functions~${g \colon \RR \to \NN}$ and~${h \colon \RR \to \RR}$ such that for any~${c \geq 1}$, every graph $G$ with growth $f_G(r)\leq cr$ is isomorphic to a subgraph of $T\boxtimes K_{g(c)}$ for some tree $T$ with growth $f_T(r)\leq h(c)r$.
\end{conj}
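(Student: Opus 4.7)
The plan is to strengthen \cref{ProductStructureLinear} by exhibiting a tree-partition of $G$ whose parts are both bounded in size and bounded in $G$-diameter. Recall that a tree-partition of $G$ is a partition $(V_x\colon x \in V(T))$ of $V(G)$ indexed by a tree $T$ such that every edge of $G$ has both endpoints in one part or in two adjacent parts; its width is ${\max_x |V_x|}$. Such a tree-partition of width $b$ automatically yields $G \subseteq T \boxtimes K_b$ via ${v \mapsto (x,\iota(v))}$, where $v \in V_x$ and $\iota$ restricts to an injection from each part into $[b]$. The key observation is that if, additionally, every part has $G$-diameter at most $\delta$ (meaning ${\max_{u,v \in V_x} \dist_G(u,v) \leq \delta}$), then $T$ inherits linear growth from $G$. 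Indeed, assuming (as we may) that $G$ is connected and every part is non-empty, every edge of $T$ is witnessed by some edge of $G$ between its two parts; so for any $x_0 \in V(T)$, any $u \in V_{x_0}$, and any $x'$ at tree-distance at most $r$ from $x_0$, one may concatenate $\delta$-bounded $G$-walks between successive parts along the tree-path from $x_0$ to $x'$ with the witnessing inter-part edges, obtaining $\dist_G(u, u') \leq r(\delta+1)+\delta$ for every ${u' \in V_{x'}}$. The tree-ball of radius $r$ around $x_0$ is therefore contained in a $G$-ball of radius $r(\delta+1)+\delta$ around $u$, and hence has at most $c(r(\delta+1)+\delta) = O(c\delta)\cdot r$ vertices.

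With this reduction in hand, the plan is: apply \cref{Main} to bound ${\tw(G) \leq O(c^2)}$; note that the maximum degree of $G$ is at most $c-1$; apply \cref{TreeProduct} to obtain a tree-partition of $G$ of width $O(c^3)$; and finally refine this tree-partition so that parts also have $G$-diameter at most some $\delta(c)$, at the cost of at most a polynomial blow-up in the width. Two refinement strategies suggest themselves. The first is to base the construction on a BFS spanning tree $T^*$ of $G$ rooted at a well-chosen vertex: since $T^*$ is a subgraph of $G$, it has growth at most $cr$ and so is already a natural candidate for the tree $T$; one would then define parts by grouping bounded-depth subtrees of $T^*$ so as to absorb all non-tree edges of $G$. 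The second is to take the tree-partition supplied by \cref{TreeProduct} and iteratively split any part whose $G$-diameter exceeds a chosen threshold, re-attaching the resulting pieces as subtrees of $T$ and arguing that the width grows by only a polynomial factor.

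I expect the refinement to be the main obstacle. Standard tree-partition constructions bound the number of vertices in each part but permit those vertices to lie arbitrarily far apart in $G$: treewidth-based locality does not imply metric locality, and nothing in the Ding--Oporowski-style arguments ties the two together. Enforcing both simultaneously seems to require a new technique that interleaves the treewidth decomposition with the distance structure of $G$. A potentially useful intermediate target is to establish \cref{LinearGrowthConj} under the additional assumption of bounded pathwidth, where $T$ may be taken to be a path: this isolates the purely metric aspect of the problem, and extending from paths to trees would then amount to handling the branching structure inductively, using the observation above that once the $G$-diameter of parts is bounded the branching of $T$ is automatically constrained by the linear growth of $G$.
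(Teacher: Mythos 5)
You are addressing \cref{LinearGrowthConj}, which the paper states as an open conjecture and does not prove, so there is no proof in the paper to compare against --- and your proposal does not close the gap either. Your opening reduction is correct and worth keeping: if a connected graph $G$ admits a tree-partition $(V_x \colon x\in V(T))$ whose parts have size at most $b=b(c)$ and $G$-diameter at most $\delta=\delta(c)$, then $G$ embeds in $T\boxtimes K_b$, and the argument you give (concatenating $\delta$-bounded walks along the tree-path together with witnessing inter-part edges, then using disjointness of the parts inside a single $G$-ball) yields $f_T(r)\le c\bigl((\delta+1)r+\delta\bigr)$, so $T$ has linear growth. This is a clean reformulation of the conjecture in tree-partition language.

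The substantive step, however --- producing from linear growth a tree-partition whose parts are \emph{simultaneously} of bounded size and of bounded $G$-diameter --- is exactly the open content of \cref{LinearGrowthConj}, and your proposal only sketches two strategies without carrying either out; you say yourself that a new technique seems to be required. Concretely: \cref{TreeProduct} controls part sizes but freely places metrically distant vertices of $G$ in a common part, and the paper itself notes that the tree $T$ it produces can have exponential growth (e.g.\ a complete binary tree), which is precisely why the statement is posed as a conjecture rather than derived from \cref{Main,TreeProduct}. Your proposed refinements are not yet arguments: splitting a part of large $G$-diameter and re-attaching the pieces as subtrees can destroy the tree-partition property (edges from the original part to its tree-neighbours may afterwards join non-adjacent parts), and no control is offered on the cumulative width blow-up over repeated splits; likewise, grouping bounded-depth subtrees of a BFS tree ``so as to absorb all non-tree edges'' is exactly where the difficulty lives, since non-tree edges of $G$ can join BFS subtrees that any bounded-size grouping keeps in distant parts. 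So the proposal records a correct and potentially useful reduction together with plausible lines of attack, but it is not a proof of \cref{LinearGrowthConj}, and the conjecture remains open.
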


This conjecture (if true) would approximately characterise graphs of linear growth in the sense that every subgraph $H$ of $T\boxtimes K_{g(c)}$ has growth $f_H(r)\leq g(c)h(c)r\in O(r)$.

More generally, for graphs of polynomial growth, we conjecture the following rough characterisation.

\begin{conj}
\label{PolyGrowth}
There exist functions~${g \colon \RR \times \NN \to \NN}$ and~${h \colon \RR \times \NN \to \RR}$ such that for any~${c \geq 1}$ and~${d \in \NN}$, every graph~$G$ with growth $f_G(r)\leq cr^d$ is isomorphic to a subgraph of $T_1\boxtimes\cdots\boxtimes T_d\boxtimes K_{g(c,d)}$, where each $T_i$ is a tree of growth $f_{T_i}(r)\leq h(c,d)r$.
\end{conj}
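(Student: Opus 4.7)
The plan is to prove \cref{PolyGrowth} by induction on $d$, with the base case $d=1$ coinciding with \cref{LinearGrowthConj}. I expect this base case to be the principal obstacle, since even the boundedness of treewidth from \cref{Main} does not obviously yield a \emph{decomposition tree} of linear growth---the recursion of balanced separators could branch and produce a host tree of exponential growth.

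For $d=1$, the strategy is to sharpen the proof of \cref{Main} so that the tree of the decomposition inherits linear growth. Each separator produced by \cref{LinearBalancedSep} is a single BFS layer $V_j$ of size less than $2c$, and the two sides $A,B$ are concentric BFS annuli with controlled radial extent. Run this recursion and record each piece as a node of a host tree $T$, making two nodes adjacent iff the corresponding annuli share a BFS interface. The $BFS$-compatibility of the separators, together with the linear growth of $G$, should control how many pieces lie within radial distance $r$ of a given piece, yielding $f_T(r) \leq h(c)r$. Combined with the $O(c^2)$ bound on bag size from \cref{Main} (and the fact that $\Delta(G) \leq c-1$, so one can replace $K$ by the appropriate clique), this would produce an embedding of $G$ into $T \boxtimes K_{g(c)}$.

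For the inductive step, given $G$ with growth $cr^d$ and $d \geq 2$, pick any vertex $v_0$ and its BFS layers $V_0,V_1,\dots$. An averaging argument based on $\sum_{i=0}^{R}|V_i| \leq cR^d$ produces thin layers $V_{j_k}$ of size $O(cR^{d-1})$ at spacings $O(R)$. Use these as successive $(d-1)$-dimensional separators: each "shell" between consecutive thin layers has radial extent $O(R)$ and, at the chosen scale, has effective $(d-1)$-dimensional growth on the order of $cR^{d-1}$. By induction each shell embeds into $T_1 \boxtimes \cdots \boxtimes T_{d-1} \boxtimes K_{g(c,d-1)}$. Stringing the shells along a path $T_d$ indexed by shell number---which is a tree of linear growth---should give the desired product decomposition into $d$ trees of linear growth.

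The hardest steps will be (i) executing the base case so that the recursion tree genuinely has linear (not merely bounded) growth, which requires showing that the number of recursion pieces intersecting any ball of radius $r$ in $G$ is itself $O(r)$; and (ii) aligning the inductive product decompositions of neighbouring shells along their shared thin-layer interfaces, so that local products can be patched into a single global product rather than a mere disjoint concatenation. Since the authors state \cref{LinearGrowthConj} as a conjecture, it seems likely that a genuinely new ingredient beyond the separator technology of \cref{LinearBalancedSep} is required, perhaps along the lines of quasi-isometric rigidity results for graphs of polynomial growth (cf.\ \citet{Trofimov84,IS86}) or the Krauthgamer--Lee embeddings mentioned in \citep{KL07}.
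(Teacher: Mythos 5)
This statement is \cref{PolyGrowth}, which the paper states as a \emph{conjecture}; there is no proof of it in the paper, and your proposal does not close that gap. Your own base case $d=1$ is exactly \cref{LinearGrowthConj}, which the paper also leaves open, and the sketch you give for it does not work as described: the separator recursion behind \cref{Main} applies \cref{LinearBalancedSep} to arbitrary (sub)subgraphs $H$, so after the first split the ``BFS layers'' are layers of the pieces, not of $G$, and distances inside a piece can be wildly larger than distances in $G$; there is no mechanism forcing the recursion tree to have linear growth, and indeed bounding the number of recursion pieces meeting a ball of radius $r$ by $O(r)$ is precisely the open content of the conjecture, not a consequence of the existing separator technology. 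Note also that the host tree in \cref{TreeProduct} comes from a separate construction and is not the recursion tree of balanced separators, so you cannot import its width bound while asserting linear growth for it.

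The inductive step has an additional concrete error: a ``shell'' between two thin BFS layers of $G$ is just a subgraph of $G$, and its growth function is still only bounded by $cr^d$; the thinness of the bounding layers controls the \emph{order of a separator}, not the growth of the shell, so the induction hypothesis for exponent $d-1$ does not apply to it. Even granting that, patching the per-shell product decompositions into a single global product $T_1\boxtimes\cdots\boxtimes T_{d-1}\boxtimes T_d\boxtimes K_{g(c,d)}$ requires the \emph{same} tree factors $T_1,\dots,T_{d-1}$ to serve all shells simultaneously and to be consistent across the interfaces, which is a genuinely new difficulty you acknowledge but do not resolve. In short, what you have is a reasonable research programme (close in spirit to the embedding results of \citep{KL07}, which give products of paths rather than trees and do not yield the conjectured linear-growth tree factors), but not a proof; both the base case and the inductive step rest on unproven, and in places incorrectly stated, claims.
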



Again, this conjecture (if true) would approximately characterise graphs of degree-$d$ polynomial growth in the sense that if $H$ is a subgraph of $T_1\boxtimes\cdots\boxtimes T_d\boxtimes K_{g(c,d)}$, then $f_H(r) \leqslant g(c,d) \, ( h(c,d)r)^d \in O(r^d)$.

\section{Growth and Subdivisions}
\label{Subdivisions}

This section considers the growth of subdivisions of graphs. A graph~$\tilde{G}$ is a \defn{subdivision} of a graph~$G$ if~$\tilde{G}$ can be obtained from~$G$ by replacing each edge~${vw}$ by a path~$P_{vw}$ with endpoints~$v$ and~$w$ (internally disjoint from the rest of~$\tilde{G}$).
If each of these paths has the same length, then $\tilde{G}$ is said to be \defn{uniform}.

\cref{Subdivisiontw} below shows that every graph with bounded degree and bounded treewidth has a subdivision with linear growth. 
By \cref{TreeProduct}, we can obtain \cref{Subdivisiontw} from the following result.


\begin{lem}\label{Subdivisiontpw}
For any $k,\Delta\in\NN$, any $\varepsilon>0$, any tree $T$, and any subgraph $G$ of $T\boxtimes K_k$ with maximum degree at most $\Delta$, there is a subdivision $\tilde{G}$ of $G$ with growth $f_{\tilde{G}}(r)\leq (k\Delta+\varepsilon)r+1$.
\end{lem}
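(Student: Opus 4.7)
The plan is to construct a subdivision $\tilde{G}$ of $G$ whose subdivision lengths are tuned to the tree structure of $T$, and then to bound $|B_r(v)|$ in $\tilde{G}$ for every vertex $v$ and every $r\in\NN$. A uniform subdivision does not suffice when $T$ has high branching (think complete binary trees): the number of $V(G)$-vertices reached by a BFS ball of $\tilde{G}$ can grow exponentially in $r$, and only a non-uniform subdivision weighted by subtree sizes of $T$ will tame this.

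Fix an arbitrary root for $T$ and a scaling parameter $\alpha=\alpha(\varepsilon,k,\Delta)$, to be chosen large at the end. For each $u\in V(G)$, write $\pi(u)$ for the unique $x\in V(T)$ with $u\in B_x$. For each edge $xy\in E(T)$ with $y$ a child of $x$, set $s(xy):=|V(T_y)|$, where $T_y$ is the subtree of $T$ rooted at $y$. I would replace each edge $uv\in E(G)$ by a path of length $\alpha\cdot s(\pi(u)\pi(v))$ if $\pi(u)\pi(v)\in E(T)$, and by a path of length $\alpha$ if $\pi(u)=\pi(v)$. Heuristically, crossing a bottleneck edge of $T$ in BFS is proportionally expensive, so bags that are far apart in $T$ are genuinely far apart in $\tilde{G}$.

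Fix $v\in V(\tilde{G})$ and $r\in\NN$; split $B_r(v)$ into its \emph{original} vertices (elements of $V(G)$) and its \emph{subdivision} vertices. For the original count, observe that any BFS walk from $v$ reaching a bag $B_x$ pays at least $\alpha\cdot s(e)$ BFS steps for each $T$-edge $e$ along the $T$-path from $\pi(v)$ to $x$. A heavy-path style telescoping---using that each light edge encountered at least halves the remaining subtree---shows that the set of bags reachable within $r$ BFS steps projects onto a region of $T$ of total weighted size $O(r/\alpha)$, so $|B_r(v)\cap V(G)|\le O(kr/\alpha)$. For the subdivision count, each subdivision vertex in $B_r(v)$ lies on an edge of $G$ incident to some $u\in B_r(v)\cap V(G)$; summing the at most $\Delta$ incident edges per such $u$, each contributing at most $\alpha\cdot s(\cdot)-1$ subdivision vertices, gives approximately $k\Delta r$ subdivision vertices in total.

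Combining the two counts gives $|B_r(v)|\le k\Delta r + C\alpha$ for some $C=C(k,\Delta)$. For $r\ge C\alpha/\varepsilon$ this is at most $(k\Delta+\varepsilon)r+1$; for smaller $r$ the crude bound $|B_r(v)|\le 1+\Delta r$ (which holds while $r$ is less than the smallest subdivision length $\alpha$) already fits within $(k\Delta+\varepsilon)r+1$ since $k\ge 1$. The hard part of the plan is the original-vertex bound: for a generic (e.g.\ uniform) subdivision the number of original vertices reached by BFS can be exponential in $r$, and the telescoping along light-edge crossings of the heavy-path decomposition---which is valid only because of the subtree-size weighting---is what rescues the linear bound.
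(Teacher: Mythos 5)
There is a genuine gap here, and it is not just in the write-up: the construction itself (subdivision lengths proportional to subtree sizes) does not satisfy the claimed growth bound, for any choice of the scaling parameter $\alpha$. Take $k=1$, $\Delta=3$ and $G=T$ equal to the complete binary tree of depth $d$, rooted at its root $v_0$ (so $G\subseteq T\boxtimes K_1$). In your $\tilde G$, the edge from a depth-$i$ node to its child becomes a path of length $\alpha(2^{d-i}-1)$, so the distance from $v_0$ to every vertex of $\tilde G$ is at most $r_0:=\sum_{i=0}^{d-1}\alpha(2^{d-i}-1)=\alpha(2^{d+1}-2-d)$; that is, $B_{r_0}(v_0)=V(\tilde G)$. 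On the other hand $|V(\tilde G)|\approx\alpha\sum_{y\neq v_0}|T_y|\approx\alpha(d-1)2^{d+1}$, which exceeds $(k\Delta+\varepsilon)r_0+1\le(3+\varepsilon)\alpha 2^{d+1}+1$ as soon as $d$ is moderately large, no matter how large $\alpha$ is. The failure is exactly at your subdivision-vertex count: from ``each original vertex of the ball has at most $\Delta$ incident edges, each contributing at most $\alpha s(\cdot)-1$ subdivision vertices'' one cannot conclude a bound of roughly $k\Delta r$; the total length of the edges incident to the original vertices in the ball is not controlled by $r$ (in the example it is of order $d\cdot r_0$, not $r_0$). Subtree-size weighting makes distances telescope geometrically along a root-to-leaf path (reaching everything costs only about twice the first step), while the total volume carries an extra factor of the depth, so no rescaling by $\alpha$ can repair this.

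The paper's proof uses a different, depth-based weighting that is calibrated precisely where yours breaks. Root $T$ at $v_0$, give each edge $e$ of $G$ the depth $\gamma(e)$ equal to the $T$-distance from $v_0$ to the nearer of the two bags containing its endpoints, and replace $e$ by a path of length $2g(\gamma(e))$, where $g$ satisfies $g(n)=1$ and the recursion $\varepsilon g(i)\ge 2g(i+1)\ell(i)+|V(G)|$ with $\ell(i)$ the number of edges of depth greater than $i$. Thus lengths explode towards the root so fast that, if $h$ denotes the smallest depth of an edge with both ends in $B_r(v)\setminus\{v\}$ (whence $r\ge g(h)$), then \emph{all} original vertices plus \emph{all} subdivision vertices of edges of depth greater than $h$ number at most $\varepsilon g(h)\le\varepsilon r$. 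A connectivity argument then shows that every original vertex of $B_r(v)$ projects into the subtree of $T$ rooted at a single node at depth $h$, so at most $k\Delta$ edges of depth at most $h$ meet the ball, and a short case analysis shows these contribute at most $k\Delta r$ subdivision vertices; altogether $|B_r(v)|\le(k\Delta+\varepsilon)r+1$. If you wish to salvage your plan, you need a length assignment with this ``everything strictly deeper than the current frontier is negligible compared with the current length'' property; subtree sizes do not have it, whereas the recursive choice of $g$ is designed to force it.
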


\begin{proof}
Let $n:=|V(T)|$, let $V(T)=\{v_{i}:i\in [0,n-1]\}$, and let $V(K_k)=\{w_i:i\in [k]\}$. For each edge $e=(v_a,w_b)(v_c,w_d)\in E(G)$, let 
$\gamma(e):=\min\{\dist_T(v_0,v_a),\dist_T(v_0,v_c)\}$. 
For every $i\in [0,n-1]$, let $\ell(i)$ be the number of edges $e$ of $G$ with $\gamma(e)> i$. 
Let $g:\NN\to \NN$ be a function such that $g(n)=1$ and $\varepsilon g(r)\geq 2g(r+1)\ell(r)+|V(G)|$ for all $r\in [0,n-1]$.
Let $\tilde{G}$ be the subdivision of $G$ obtained by replacing each edge $e\in E(G)$ by a path of length $2g(\gamma(e))$.

For a vertex $v\in V(\tilde{G})$ and a positive integer $r$, consider the ball $B_r(v)$ in $\tilde{G}$.
If there is no edge $xy\in E(G)$ such that $x,y\in B_r(v)\setminus\{v\}$, then $G[B_r(v)]$ is a subdivision of a star and $|B_r(v)|\leq 1+\Delta r$, as required.

Otherwise, let $h:=\min\{\gamma(xy):xy\in E(G), x,y\in B_r(v)\setminus\{v\} \}$. Note that $r\geq g(h)$. 
Let $S_1$ be the set of subdivision vertices of edges $e\in E(G)$ with $\gamma(e)\leq h$, and let $S_2$ be the set of subdivision vertices of edges $e\in E(G)$ with $\gamma(e)>h$.
By the definition of $g$, we have that $|S_2|+|V(G)|\leq \varepsilon r$.
Since $\tilde{G}[B_r(v)]$ is connected and by the definition of $h$, there is no vertex $(v_i,w_j)\in V(G)\cap B_r(v)$ such that $v_i$ is at distance less than $h$ from $v_0$ in $T$.
Hence $B_r(v)$ contains subdivision vertices of at most $k\Delta$ edges $e$ of $G$ with $\gamma(e)\leq h$.

Suppose that $r< 2g(h)$, and note that $v$ is a subdivision vertex of some edge $x_0y_0\in E(G)$ with $\gamma(x_0y_0)\leq h$.
Consider an edge $xy\in E(G)\setminus \{x_0y_0\}$ such that $\gamma(e)\leq h$. Note that the total number of subdivision vertices of $xy$ or $x_0y_0$ contained in $B_r(v)$ is at most $2r$, and that $B_r(v)$ contains at least as many subdivision vertices of $x_0y_0$ as of $xy$.
It follows that $|B_r(v)\cap S_1|\leq k\Delta r$.

Now suppose that $r\geq 2g(h)$. In this case, every edge $e\in E(G)$ with $\gamma(e)=h$ has fewer than $r$ subdivision vertices and $B_r(v)$ contains at most $r$ subdivision vertices of each edge $e\in E(G)$ with $\gamma(e)<h$, and so again $|B_r(v)\cap S_1|\leq k\Delta r$.
Hence in both cases $|B_r(v)|\leq (k\Delta+\varepsilon)r$, as required. 
\end{proof}

The following theorem is a direct consequence of \cref{TreeProduct,Subdivisiontpw}.

\begin{thm}
    \label{Subdivisiontw}
    For any~${k, \Delta \in \NN}$ and~${\varepsilon > 0}$, every graph~$G$ with maximum degree~$\Delta$ and treewidth less than~$k$ has a subdivision~$\tilde{G}$ with growth~${f_{\tilde{G}}(r) \leq (18k\Delta^2+\varepsilon)r+1}$. \qed
\end{thm}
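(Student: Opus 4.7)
The plan is to combine Lemmas \ref{TreeProduct} and \ref{Subdivisiontpw} directly, which is why the paper flags this as an immediate consequence. First, I would invoke \cref{TreeProduct} on the input graph $G$: since $\tw(G) < k$ and $G$ has maximum degree $\Delta$, there is a tree $T$ such that $G$ is isomorphic to a subgraph of $T \boxtimes K_{18k\Delta}$.

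Next, I would feed this product embedding into \cref{Subdivisiontpw}, applied with the parameter renaming $k \mapsto 18k\Delta$ and $\Delta \mapsto \Delta$ (the maximum degree of $G$ is unchanged), and with the same $\varepsilon$. This produces a subdivision $\tilde{G}$ of $G$ whose growth satisfies
\[
    f_{\tilde{G}}(r) \leq \bigl( (18k\Delta)\cdot\Delta + \varepsilon\bigr) r + 1 = (18k\Delta^2 + \varepsilon)r + 1,
\]
which is exactly the required bound. No other step is needed: the maximum-degree hypothesis is preserved under isomorphism into the product (it is a hypothesis on $G$, not on $T\boxtimes K_{18k\Delta}$), and the $+1$ and additive $\varepsilon$ factors match those in \cref{Subdivisiontpw} without any bookkeeping. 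The only conceptual content already lives in the two cited lemmas, so there is no real obstacle; the one thing to be careful about is substituting the correct values of the two parameters (both called $k$ and $\Delta$ in the two source lemmas) so that the product $k\Delta$ in \cref{Subdivisiontpw} becomes $18k\Delta^2$.
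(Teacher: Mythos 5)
Your proposal is correct and matches the paper exactly: the theorem is stated as a direct consequence of \cref{TreeProduct} and \cref{Subdivisiontpw}, obtained precisely by the substitution $k \mapsto 18k\Delta$ in \cref{Subdivisiontpw} that you carry out. Nothing further is needed.
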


Note that the growth of any subdivision~$\tilde{G}$ of a graph~$G$ depends on the maximum degree~$\Delta$ of~$G$ (since~${f_{\tilde{G}}(1) \geq \Delta+1}$). We now show that every graph~$G$ with bounded treewidth is a minor of a graph~$G'$ with linear growth where the growth of~$G'$ does not depend on the maximum degree of~$G$. 
\citet{MS11} proved that every graph~$G$ is a minor of some graph~$G'$ with maximum degree~$3$ and treewidth at most~${\tw(G)+1}$. 
\cref{Subdivisiontw} applied to~$G'$ gives the following result. 

\begin{cor}
\label{MinorTW}
    For every~${k \in \NN}$ and~${\varepsilon > 0}$, every graph~$G$ with treewidth less than~$k$ is a minor of some graph~${\tilde{G}}$ with growth~${f_{\tilde{G}}(r) \leq (162(k+1)+\varepsilon)r+1}$.
    \qed
\end{cor}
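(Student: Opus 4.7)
The plan is to chain two results: the \citet{MS11} cubification quoted in the paragraph preceding the statement, and \cref{Subdivisiontw}. First, I would apply the \citet{MS11} theorem to $G$ to obtain a graph $G'$ that has $G$ as a minor, has maximum degree at most $3$, and satisfies $\tw(G') \leq \tw(G)+1$. Since $\tw(G) < k$, this gives $\tw(G') \leq k$, equivalently $\tw(G') < k+1$.

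Next, I would apply \cref{Subdivisiontw} to $G'$ with $\Delta = 3$ and with the role of $k$ played by $k+1$. This produces a subdivision $\tilde{G}$ of $G'$ whose growth satisfies
\[
f_{\tilde{G}}(r) \leq (18(k+1)\cdot 3^2 + \varepsilon)r + 1 = (162(k+1) + \varepsilon)r + 1,
\]
which is exactly the target bound.

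Finally, every subdivision of $G'$ has $G'$ as a minor: contracting the internal edges of each subdivision path restores $G'$. Since the minor relation is transitive and $G$ is already a minor of $G'$, it follows that $G$ is a minor of $\tilde{G}$, completing the argument.

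There is no substantive obstacle here: once \cref{Subdivisiontw} and the \citet{MS11} cubification are invoked with parameters $(k+1, 3)$ and $\tw(G) < k$ respectively, the corollary is immediate and the only work is bookkeeping the constants. The genuine difficulty is hidden upstream, inside the proof of \cref{Subdivisiontpw}, where the subdivision lengths must be tuned to a tree-decomposition so that balls of radius $r$ contain only $O(k\Delta r)$ subdivision vertices.
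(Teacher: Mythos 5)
Your proof is correct and follows exactly the paper's route: the paper also obtains $\tilde{G}$ by first applying the \citet{MS11} result to get a degree-$3$ graph $G'$ with $\tw(G')\leq\tw(G)+1<k+1$, and then invoking \cref{Subdivisiontw} with parameters $k+1$ and $\Delta=3$, yielding the constant $18(k+1)\cdot 3^2=162(k+1)$. The bookkeeping of constants and the transitivity-of-minors step are exactly as in the paper.
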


\cref{Subdivisiontw} implies that \cref{Main} is best possible in the sense that treewidth cannot be replaced by any parameter that is unbounded for graphs of bounded treewidth and bounded maximum degree, and does not decrease when taking subdivisions. 
As an example, pathwidth\footnote{A \defn{path-decomposition} is a tree-decomposition that is indexed by the nodes of a path. 
The \defn{pathwidth} of a graph~$G$ is the minimum width of a path-decomposition of~$G$.}
is a graph parameter that is unbounded on trees with maximum degree~$3$ and does not decrease when taking subdivisions. In particular, there is a class of trees that has linear growth and unbounded pathwidth. Thus, ``treewidth'' cannot be replaced by ``pathwidth'' in \cref{Main}.

Finally, consider subdividing graphs with bounded maximum degree without the assumption of bounded treewidth. 
While \cref{Main} implies that we cannot obtain linear growth in this more general setting, we can get arbitrarily close in the following sense. 
A function~${f \colon \NN \to \RR}$ is \defn{superlinear} if~${\frac{f(x)}{x} \to \infty}$ as~${x \to \infty}$. 
We now show that for every superlinear function~$f$ with~${f(r)\geq 1+\Delta r}$, every graph with maximum degree at most~$\Delta$ admits a subdivision of growth at most~$f$.

\begin{thm}
    \label{SubdivisionSuperlinear2}
For any~$\Delta\in\NN$ and any superlinear function~${f \colon \NN \to \RR}$ with~${f(r) \geq \Delta r+1}$, every graph~$G$ with maximum degree~$\Delta$ has a uniform subdivision~$\tilde{G}$ with growth~${f_{\tilde{G}}(r) \leq f(r)}$.
\end{thm}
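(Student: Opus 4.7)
The plan is to take $\tilde{G}$ to be the uniform subdivision of $G$ in which every edge is replaced by an internally disjoint path of length $s$, for a single integer $s \geq 1$ chosen large enough depending on $G$, $\Delta$, and $f$. Writing $n := |V(G)|$, the superlinearity of $f$ yields an integer $R \geq 1$ with $f(r) \geq n\Delta r$ for every $r \geq R$, and I would set $s := R$. The case $\Delta \leq 1$ is trivial (take $\tilde{G} = G$, so every ball has at most $2 \leq f(r)$ vertices), so I assume $\Delta \geq 2$ throughout.

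The argument then splits according to the radius $r$. For the large-radius regime $r \geq s$, the crude bound
\[
    |B_r(v)| \leq |V(\tilde{G})| = n + (s-1)|E(G)| \leq n + \tfrac{(s-1)\Delta n}{2} \leq n\Delta s \leq n\Delta r \leq f(r)
\]
suffices. The core of the argument is the small-radius regime $r < s$, where I would show directly that $|B_r(v)| \leq \Delta r + 1 \leq f(r)$. Call the vertices of $G$ the \emph{branch vertices} of $\tilde{G}$ and the new vertices its \emph{path vertices}. The key observation is that, since moving between any two branch vertices in $\tilde{G}$ costs at least $s$ steps, for $r < s$ the ball $B_r(v)$ can contain only the (at most two) branch vertices closest to $v$.

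From there, a short case analysis finishes the proof. If $v$ is a branch vertex, then $B_r(v)$ consists of $v$ together with at most $r$ path vertices along each of the at most $\Delta$ incident subdivided edges, giving $|B_r(v)| \leq 1 + \Delta r$. If instead $v$ is a path vertex on the subdivided edge $ab$ at distances $d_a$ and $d_b = s - d_a$ from $a$ and $b$, then I would enumerate the vertices along edge $ab$ and, whenever $r \geq d_a$ or $r \geq d_b$, along the at most $\Delta - 1$ other edges incident to $a$ or $b$; summing these contributions and invoking $r < s$ together with $\Delta \geq 2$ yields the desired $|B_r(v)| \leq 1 + \Delta r$. The main obstacle is the subcase in which $r \geq d_b$, so the ball reaches both $a$ and $b$ and branches out at two branch vertices: here the contribution from the full subdivided edge $ab$ plus the two side branches gives at most $2(\Delta-1)r - (\Delta-2)s + 1$ vertices, and the bound $|B_r(v)| \leq \Delta r + 1$ holds only just, saved by the combined use of $r < s$ and $\Delta \geq 2$.
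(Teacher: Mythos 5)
Your proof is correct and takes essentially the same route as the paper: replace every edge by a path of uniform length chosen via superlinearity so that large-radius balls are handled by the trivial bound $|B_r(v)|\leq|V(\tilde{G})|\leq f(r)$, while small-radius balls are locally star-like and have at most $\Delta r+1\leq f(r)$ vertices. The only difference is parametric: the paper subdivides each edge $2\ell$ times so that a ball of radius $r\leq\ell$ meets at most one branch vertex, which avoids your two-branch-vertex subcase, whereas your explicit treatment of that subcase (and of $\Delta\leq 1$) checks out arithmetically.
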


\begin{proof}
    Let~${m := |E(G)|}$, ${n := |V(G)|}$, and let~${\ell \in \NN}$ be such that~${f(r) \geq 2r m+n}$ for all~${r \geq \ell}$ (which exists since~$f$ is superlinear). 
    Let~$\tilde{G}$ be obtained from~$G$ by subdividing every edge~$2\ell$ times. 
    Now consider~${r \in \NN}$ and a vertex~${v \in V(\tilde{G})}$. 
    If~${r \geq \ell}$, then ${|B_r(v)| \leq |V(\tilde{G})| \leq 2\ell m+n\leq f(r)}$. 
    If~${r\leq \ell}$, then~${\tilde{G}[B_r(v)]}$ is isomorphic to a subdivision of a star, so~${|B_r(v)|\leq 1+\Delta r
    \leq f(r)}$, as required.
\end{proof}

As mentioned above, every graph is a minor of some graph of maximum degree $3$. Hence we have the following immediate corollary of \cref{SubdivisionSuperlinear2}. 

\begin{cor}\label{MinorSuperlinear}
For any superlinear function     $f:\NN\to\RR$ with $f(r)\geq 1+3r$, every graph $G$ is a minor of a graph $\tilde{G}$ with growth $f_{\tilde{G}}(r)\leq f(r)$.
\end{cor}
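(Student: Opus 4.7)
The plan is to combine \cref{SubdivisionSuperlinear2} with the observation (already highlighted in the paragraph preceding the corollary statement) that every graph is a minor of some graph of maximum degree at most $3$, and then invoke transitivity of the minor relation. The whole argument is short because the substantive work has already been done in \cref{SubdivisionSuperlinear2}; the corollary is really a bookkeeping exercise.

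First I would fix an arbitrary graph $G$ and let $G'$ be a graph with maximum degree at most $3$ such that $G$ is a minor of $G'$. The existence of such a $G'$ is a standard construction: replace every vertex $v$ of degree $d \geq 4$ by a path (or more symmetrically, a tree) on $d$ vertices, redistributing the edges incident to $v$ so that each new vertex receives at most one of them and at most two ``replacement'' edges from the path. Contracting each of these paths to a single vertex recovers $G$, witnessing that $G$ is a minor of $G'$. (The paper has already cited \citet{MS11} for a treewidth-preserving version of this statement, but for the present corollary any max-degree-$3$ model suffices.)

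Next I would apply \cref{SubdivisionSuperlinear2} to $G'$ with $\Delta = 3$ and the given superlinear function $f$; the hypothesis $f(r) \geq 1+3r$ is exactly the condition $f(r) \geq \Delta r + 1$ needed by that theorem. This produces a uniform subdivision $\tilde{G}$ of $G'$ whose growth satisfies $f_{\tilde{G}}(r) \leq f(r)$.

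Finally I would observe that $G'$ is a minor of $\tilde{G}$ (contract each subdivided edge-path back to a single edge), and since the minor relation is transitive and $G$ is a minor of $G'$, we conclude that $G$ is a minor of $\tilde{G}$, as required. There is no real obstacle here: the only place any care is needed is ensuring that the max-degree-$3$ minor model actually exists, and this is a textbook fact.
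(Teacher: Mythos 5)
Your proposal is correct and follows the paper's own argument exactly: take a maximum-degree-$3$ graph $G'$ having $G$ as a minor, apply \cref{SubdivisionSuperlinear2} with $\Delta=3$, and use transitivity of the minor relation (a subdivision has the original graph as a minor). The paper treats this as immediate, so your slightly more detailed write-up is just an expansion of the same proof.
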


\cref{MinorSuperlinear} shows that, for any superlinear function $f$ with $f(r)\geq 1+3 r$, any minor-monotone graph parameter that is unbounded on the class of all graphs is also unbounded on the class of graphs $G$ with $f_G(r)\leq f(r)$.
For example, ``$cr$'' cannot be replaced by ``$cr^{1+\varepsilon}$'' in \cref{Main}. 

\renewcommand{\thefootnote}{\arabic{footnote}}

\subsection*{Acknowledgements} 

This research was initiated at the
\href{https://www.matrix-inst.org.au/events/structural-graph-theory-downunder-ll/}{Structural Graph Theory Downunder II} 
program of the Mathematical Research Institute MATRIX (March 2022). 
 
{
\fontsize{10pt}{11pt}
\selectfont
\bibliographystyle{DavidNatbibStyle}
\bibliography{main.bbl}
}

\appendix

\section{Stack-number of infinite graphs}
\label{StackInfinite}

The following result is proved via a standard compactness argument.

\begin{prop}
\label{stacknumber-compactness}
For~$k\in\NN$, a graph~$G$ has stack-number at most~$k$ if and only if every finite subgraph of~$G$ has stack-number at most~$k$.
\end{prop}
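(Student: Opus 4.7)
The forward direction is immediate: for any $k$-stack layout $(\leq, \varphi)$ of $G$, its restriction to any finite subgraph $F$ is a $k$-stack layout of $F$.

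For the backward direction, I would apply Tychonoff's theorem. Consider the compact product space
\[
X := \{0,1\}^{V(G) \times V(G)} \times [k]^{E(G)},
\]
each factor taken with the discrete topology; points of $X$ encode pairs $(\leq, \varphi)$ consisting of a binary relation on $V(G)$ and a coloring of $E(G)$. For each finite subgraph $F$ of $G$, let $C_F \subseteq X$ be the set of those $(\leq, \varphi)$ whose restriction $(\leq|_{V(F)}, \varphi|_{E(F)})$ is a $k$-stack layout of $F$. Since this condition depends only on the coordinates indexed by $V(F) \times V(F)$ and $E(F)$, the set $C_F$ is closed (in fact clopen) in $X$. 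By hypothesis, $F$ has a $k$-stack layout, which extends to an element of $X$ by choosing any total order on $V(G)$ extending the given one and any extension of the coloring to $E(G)$; hence $C_F$ is nonempty.

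The family $(C_F)$, indexed by finite subgraphs, has the finite intersection property: given $F_1, \dots, F_m$, the union $F := F_1 \cup \cdots \cup F_m$ is itself a finite subgraph of $G$, and $C_F \subseteq \bigcap_{i=1}^m C_{F_i}$, because restricting a $k$-stack layout of $F$ to any $F_i$ yields a $k$-stack layout of $F_i$. By Tychonoff's theorem, $\bigcap_F C_F \neq \emptyset$, where the intersection ranges over all finite subgraphs $F$ of $G$.

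Any element $(\leq, \varphi)$ of this intersection is a $k$-stack layout of $G$: totality, antisymmetry, and transitivity of $\leq$ are each witnessed on finite vertex sets, and the non-crossing condition on $\varphi$ is a constraint on pairs of edges, so each such condition is certified by a suitable finite subgraph. The only point requiring care is the observation that being a $k$-stack layout is genuinely defined by finitary constraints, so that its verification reduces to finite subgraphs; once this is noted, the argument is a routine invocation of Tychonoff's theorem (equivalently, the compactness theorem of propositional logic, or K\"onig's lemma in the countable case).
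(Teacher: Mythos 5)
Your proof is correct and takes essentially the same approach as the paper: the paper packages the compactness step via the Generalised Infinity Lemma applied to the directed inverse system of $k$-stack layouts of finite (induced) subgraphs with restriction maps, while you invoke Tychonoff's theorem and the finite intersection property on the product space of relation--colouring pairs, which is an interchangeable formulation of the same principle. The remaining verification---that the limit object is a linear order and satisfies the non-crossing condition because each constraint is witnessed on a finite subgraph---is identical in both arguments.
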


First, we introduce a version of the compactness principle in combinatorics; see~\cite[Appendix~A]{Diestel5}. A partially ordered set~${(\mathcal{P}, \leq)}$ is \defn{directed} if any two elements have a common upper bound; that is,~for any~${p,q \in \mathcal{P}}$ there exists~${r \in \mathcal{P}}$ with~${p \leq r}$ and~${q \leq r}$.
A \defn{directed inverse system} consists of 
a directed poset~$\mathcal{P}$, 
a family of sets~${( S_p \colon p \in \mathcal{P} )}$, and
for all~${p, q \in \mathcal{P}}$ with~${p < q}$ a map~${g_{q,p} \colon S_q \to S_p}$
such that the maps are \defn{compatible}; that is,~${g_{q, p} \circ g_{r, q} = g_{r, p}}$ for all ${p, q, r \in \mathcal{P}}$ with~${p < q < r}$.
The \defn{inverse limit} of such a directed inverse system is the set 
\[
    \lim \limits_{\longleftarrow}\, ( S_p \colon p \in \mathcal{P}) = \left\{ ( s_p \colon p \in \mathcal{P}) \in \prod \limits_{p \in \mathcal{P}} S_p \, \colon \, g_{q,p}(s_q) = s_p \textnormal{ for all } p,q \in \mathcal{P} \textnormal{ with } p < q \right\}.
\]

\begin{lem}[\mbox{Generalised Infinity Lemma~\cite[Appendix~A]{Diestel5}}]
    \label{gen-inf-lemma}
    The inverse limit of any directed inverse system of non-empty finite sets is non-empty. 
\end{lem}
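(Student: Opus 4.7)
The plan is to give the standard topological compactness proof. I would equip each finite set $S_p$ with the discrete topology, so that it becomes compact Hausdorff, and then form the product space $X := \prod_{p \in \mathcal{P}} S_p$, which is compact by Tychonoff's theorem. Next, I would express the inverse limit as an intersection of closed subsets of $X$:
\[
    \lim_{\longleftarrow}\, (S_p \colon p \in \mathcal{P}) = \bigcap_{p < q} C_{p,q}, \qquad C_{p,q} := \{(s_r)_{r \in \mathcal{P}} \in X \colon g_{q,p}(s_q) = s_p\}.
\]
Each $C_{p,q}$ is closed because it is the preimage of a single point under a continuous map into the finite discrete space $S_p$, namely $(s_r)_{r \in \mathcal{P}} \mapsto g_{q,p}(s_q) \cdot s_p^{-1}$ in spirit, or more concretely the equaliser of $(s_r) \mapsto g_{q,p}(s_q)$ and $(s_r) \mapsto s_p$.

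By the finite intersection characterisation of compactness, it then suffices to show that any finite subfamily of the $C_{p,q}$ has non-empty intersection. Given pairs $(p_1,q_1), \ldots, (p_n,q_n)$ with $p_i < q_i$, I would use directedness of $\mathcal{P}$ iteratively to produce a single $r \in \mathcal{P}$ with $r \geq q_i$ for every $i \in [n]$. Picking any $s_r^{*} \in S_r$ (non-empty by hypothesis), I would set $s_p := g_{r,p}(s_r^{*})$ whenever $p \leq r$, and for the remaining indices choose $s_p \in S_p$ arbitrarily. Compatibility of the connecting maps along the chain $p_i < q_i \leq r$ then yields
\[
    g_{q_i,p_i}(s_{q_i}) = g_{q_i,p_i}\bigl(g_{r,q_i}(s_r^{*})\bigr) = g_{r,p_i}(s_r^{*}) = s_{p_i},
\]
so this tuple lies in every $C_{p_i,q_i}$, establishing the finite intersection property. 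Compactness of $X$ then forces $\bigcap_{p < q} C_{p,q}$ to be non-empty, which is the desired conclusion.

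The main potential obstacle is philosophical rather than technical: Tychonoff's theorem for arbitrary products of compact spaces is equivalent to the axiom of choice, and some use of choice is genuinely required in this generality (both to pick the initial $s_r^{*}$ across all finite subsystems and to fill in the ``irrelevant'' coordinates). For countable $\mathcal{P}$ this can be avoided by an elementary K\"onig's-lemma-style argument along a cofinal increasing chain in $\mathcal{P}$, extracting a coherent family by repeated pigeonhole on the finite sets $S_p$; but for the general directed case the Tychonoff route is by far the cleanest and is the one I would present.
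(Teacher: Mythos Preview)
Your argument is correct and is the standard Tychonoff-based proof of this compactness principle. Note, however, that the paper does not actually prove this lemma: it is quoted verbatim from Diestel's textbook (Appendix~A) and invoked as a black box in the proof of \cref{stacknumber-compactness}. So there is nothing to compare against on the paper's side; your proof is precisely the kind of argument one finds in the cited reference.
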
 

\begin{proof}[Proof of~\cref{stacknumber-compactness}]
    For a linear order~$\leq$ on a set~$X$ and a subset~${Y \subseteq X}$, let~$\leq{\upharpoonright}Y$ denote the restriction of~$\leq$ to~$Y$. 
    Similarly for a function~$\varphi$ with domain~$X$ and a subset~${Y \subseteq X}$, let~$\varphi{\upharpoonright}Y$ denote the restriction of~$\varphi$ to~$Y$. 
    
    ($\Longrightarrow$) Clearly~$(\leq{\upharpoonright}V(H),\varphi{\upharpoonright}E(H))$ is a $k$-stack layout for any $k$-stack layout~${(\leq,\varphi)}$ of~$G$ and any subgraph~$H$ of~$G$. 
        
    ($\Longleftarrow$) Let~$\mathcal{P}$ be the set of finite subsets of~$V(G)$ and consider the directed poset~${(\mathcal{P},\subseteq)}$. 
    For every finite set~${X \subseteq V(G)}$, let~$S_X$ be the set of all $k$-stack layouts of~${G[X]}$. 
    For~${Y \subseteq X \in \mathcal{P}}$ and~${(\leq,\varphi) \in S_X}$, let~${g_{X,Y}(\leq,\varphi) := (\leq{\upharpoonright}Y, \varphi{\upharpoonright}E(G[Y]))}$, and note that~${g_{X,Y}(\leq,\varphi) \in S_Y}$. 
    Moreover, note that for~${Z \subseteq Y \subseteq X \in \mathcal{P}}$ and~${(\leq,\varphi) \in S_X}$, 
    \begin{align*}
        g_{Y,Z}(g_{X,Y}(\leq,\varphi)) 
            = ((\leq{\upharpoonright}Y){\upharpoonright}Z, (\varphi{\upharpoonright}E(G[Y])){\upharpoonright}E(G[Z]))) & = (\leq{\upharpoonright}Z, \varphi{\upharpoonright}E(G[Z])) \\
            &= g_{X,Z}(\leq,\varphi).
    \end{align*}
    Hence, we have a directed inverse system of non-empty finite sets. 
    By the Generalised Infinity Lemma, there is an element~${\big( (\leq_X, \varphi_X) \in S_X \, \colon \, X \in \mathcal{P} \big)}$ in the inverse limit. 
    Define a relation~$\leq$ on~$V(G)$ by setting~${v \leq w}$ if~${v \leq_{\{v,w\}} w}$ for~${v,w \in V(G)}$, and define a function~$\varphi$ on~$E(G)$ by setting~${\varphi(vw) := \varphi_{\{v,w\}}(vw)}$ for~${vw \in E(G)}$. 
    By the compatibility of the maps~$g_{X,Y}$, we have that~$\leq$ is a linear order on~$V(G)$ and~${(\leq{\upharpoonright}X,\varphi{\upharpoonright}E(G[X])) \in S_X}$ for all~${X \in \mathcal{P}}$. 
    Now any two edges $ux$ and~$vy$ with~${u < v < x < y}$ are assigned distinct colours since~${(\leq{\upharpoonright}X,\varphi{\upharpoonright}E(G[X])) \in S_X}$ for~${X = \{u,v,x,y\}}$. 
    Hence~$(\leq,\varphi)$ is a~$k$-stack layout of~$G$. 
\end{proof}
\end{document}